\newtheorem{theo}{Theorem}
\newtheorem{proposition}[theo]{Proposition}
\newtheorem*{definition}{Definition}
\newtheorem*{remarque}{Remark}
\newtheorem*{definition*}{Definition}
\newtheorem{theo*}{Theorem}[section]
\newtheorem{question}{Question}
\newtheorem{proposition*}[theo*]{Proposition}
\newtheorem{lemme*}[theo*]{Lemma}
\newcommand{\pp}{\text{.}}
\newcommand{\vv}{\text{,}}
\newcommand{\dt}{\partial_t}
\DeclareMathOperator{\supp}{supp}
\def\Z{\mathbf{Z}}
\def\R{\mathbf{R}}
\begin{document}

\begin{center}{\Large{\textbf{Spectral asymptotics for the vectorial damped wave equation }}}\end{center}

\bigskip

\begin{center}
Guillaume Klein
\end{center}

\bigskip	

\begin{abstract}The eigenfrequencies associated to a scalar damped wave equation are known to belong to a band parallel to the real axis. In \cite{sjo00} J. Sjöstrand showed that up to a set of density $0$, the eigenfrequencies are confined in a thinner band determined by the Birkhoff limits of the damping term. In this article we show that this result is still true for a vectorial damped wave equation. In this setting the Lyapunov exponents of the cocycle given by the damping term play the role of the Birkhoff limits of the scalar setting. 
\end{abstract}

\section{Introduction}

\subsection{Setting}
\paragraph*{}Let $(M,g)$ be a compact Riemannian manifold of dimension $d$ without boundary, we will moreover assume that $(M,g)$ is connected and $C^\infty$. Let $\Delta$  be the Laplace-Beltrami operator on $M$ for the metric $g$ and let $a$ be a $C^\infty$-smooth function from $M$ to $\mathscr{H}_n(\mathbf C)$, the space of hermitian matrices of dimension $n$. We are interested in the following system of equations
\begin{equation}\label{dampedwaveequation''}
\left\lbrace
\begin{array}{l}
 (\partial_t^2 -\Delta +2a(x)\partial_t)u=0 \; \text{ in } \; \mathcal D'(\mathbf R\times M)^n \\

u_{|t=0}=u_0\in H^1(M)^n\; \text{ and } \; \partial_tu_{|t=0}=u_1\in L^2(M)^n\pp
\end{array}
\right.
\end{equation}
Let $H=H^1(M)^n\oplus L^2(M)^n$ and define on $H$ the unbounded operator
\[
A_a=\begin{pmatrix}
0 & \mathrm{Id}_n \\
\Delta & -2a
\end{pmatrix} \text{ of domain } D(A_a)=H^2(M)^n\oplus H^1(M)^n\pp
\]
By application of Hille-Yosida theorem to $A_a$ the system \eqref{dampedwaveequation''} has a unique solution in the space $ C^0(\mathbf R,H^1(M)^n)\cap C^1(\mathbf R, L^2(M)^n)$. If $u$ is a solution of \eqref{dampedwaveequation''} then we can define the energy of $u$ at time $t$ by the formula
\begin{equation}\label{eq:energie}
E(u,t)=\frac{1}{2}\int_M |\dt u(t,x)|^2+|\nabla u(t,x)|^2_g d x
\end{equation}
where $|v|^2_g=g_x(v,v)$. An integration by parts shows that
\begin{equation}\label{eq:formuleenergie''}
\frac{d}{d t}E(u,t)=-\int_M \left\langle 2a(x)\dt u(t,x), \dt u(t,x) \right\rangle_{\mathbf C^n} d x\vv
\end{equation}
so if $a$ is not null then the energy is not constant. In particular if $a$ is positive semi-definite the energy is non-increasing and $a$ can be seen as a dampener. In the problem of stabilisation of the wave equation one is interested with the long time behavior of the energy of solutions to \eqref{dampedwaveequation''}. The spectrum of $A_a$ is obviously related to this long time behavior although it is not sufficient to completely describe it (see for example \cite{leb93}). In this article we will thus be interested in the asymptotic distribution of the eigenvalues of $A_a$. Besides the possible applications to stabilisation the problem of determining an asymptotic distribution of eigenvalues for a non-self-adjoint operator is an interesting problem by itself but it can also be seen as one of the many generalization of Weyl's law. This problem has already been studied in the scalar case ($n=1$) for example in \cite{sjo00}. The aim of this article is to expend some results of \cite{sjo00}  to the case where  $n\geq 1$.

\subsection{Results}
\paragraph*{}It is a classical fact that the spectrum of $A_a$ is symmetric with respect to the real axis and that it contains only discrete eigenvalues with finite multiplicities, this is proved using Fredholm theory. If $u$ is a stationary solution of \eqref{dampedwaveequation''} we can write $u(t,x)=e^{it\tau}v(x)$ and we are lead to the equation
\begin{equation}\label{dampedwaveequation2}
(-\Delta - \tau^2 +2ia\tau)v=0 \pp
\end{equation}
We say that $\tau\in \mathbf C$ is an eigenvalue for \eqref{dampedwaveequation2} if there is a non-zero solution $v$ of \eqref{dampedwaveequation2}. Note that $\tau$ is an eigenvalue of \eqref{dampedwaveequation2} if and only if $i\tau$ is an eigenvalue of $A_a$.

\begin{definition}
We note $\mathrm{sp}(a(x))$ the set of all the eigenvalues of $a(x)$ and define
\[
a^-=\inf_{x\in M} \min \mathrm{sp}(a(x)) \;\text{ and }\; a^+=\sup_{x\in M} \max \mathrm{sp}(a(x))\pp
\]

\end{definition}

 By reasoning on the energy with \eqref{eq:formuleenergie''} it is easy to see that $\mathfrak{Im}(\tau)$ can only be in the interval $[2a^-;2a^{+}]$. We are now interested in the asymptotic repartition of $\mathfrak{Im}(\tau)$ when $|\tau|$ goes to infinity. Since the spectrum of $A_a$ is invariant under complex conjugation we will only be interested in the limit $\mathfrak{Re }(\tau)\to +\infty$.

\paragraph*{} We define the function $p: T^*M \to \mathbf R$ by $p(x,\xi)=g^x(\xi,\xi)$, notice that $p$ is the principal symbol of $-\Delta$. We also define $\phi$ as the Hamiltonian flow generated by $p$, it can also be interpreted as the geodesic flow on $T^* M $ travelled at twice the speed. In what follows $(x_0,\xi_0)$ will be a point of $T^*M$ and we will write $(x_t,\xi_t)$ for $\phi_t(x_0,\xi_0)$.
\begin{definition}
Let $t$ be a real number, we define the function $G_t : p^{-1}(1/2) \to \mathscr M_n(\mathbf C)$ as the solution of the differential equation
\begin{equation}\label{eq:equationG'}
\left\lbrace
\begin{array}{l}
G_0(x_0,\xi_0)=\mathrm{Id}_n \\
\partial_t G_t(x_0,\xi_0)=-a(x_t)G_t(x_0,\xi_0)\pp
\end{array}
\right.
\end{equation}
This definition naturally extends to any point $(x_0,\xi_0)\in T^*M$.
\end{definition}
Notice that $G$ is a cocycle map, this means that for every $s,t\in \mathbf R$ and every $(x,\xi)\in T^*M$ we have the equality $G_{t+s}(x,\xi)=G_t(\phi_s(x,\xi))G_s(x,\xi)$. When $n=1$ the function $a$ is real valued and we simply have
\begin{equation}\label{eq:equationGscalaire}
G_t(x_0,\xi_0)=\exp\left(-\int_0^t a(x_s) d s\right)\pp
\end{equation}
This formula however ceases to be correct when $n$ is greater than $1$. Throughout the entire article if $E$ is a vector space and $\|\cdot\|_*$ is a norm on $E$ we will also write $\|\cdot\|_*$ for the associated operator norm on $\mathcal{L}(E)$. 
\begin{definition*}
For every positive time $t$ we define the following quantities 
\[C^-_t =\frac{-1}{t}\sup_{(x_0,\xi_0)\in p^{-1}(1/2)} \ln \left( \|G_t(x_0;\xi_0)\|_2 \right)\]
\[\text{ and } C^+_t =\frac{-1}{t}\inf_{(x_0,\xi_0)\in p^{-1}(1/2)} \ln \left( \|G_t(x_0;\xi_0)^{-1}\|_2^{-1} \right)\pp\]
We will also note $C_\infty^\pm = \lim_{t\to +\infty}C_t^\pm$.
\end{definition*}
It is easy to show that this limit always exists using a sub-additivity argument. Note that the existence and the value of the limit does not depend on the choice of the norm since they are all equivalent, however it is sometimes easier to work with the operator norm associated with the euclidian norm on $\mathbf C^n$. Again, if $n=1$ we get the simpler expression 
\[
C^-_t=\inf_{(x,\xi)\in p^{-1}(1/2)} \frac{1}{t}\int_0^t a(x_s) ds \;\text{ and }\; C^+_t=\sup_{(x,\xi)\in p^{-1}(1/2)} \frac{1}{t}\int_0^t a(x_s) ds\pp
\]
For an analogue to this expressions when $n\geq 1 $ see \cite{thesis} .

\begin{theo*}\label{th:1}
For every $\varepsilon>0$ there is only a finite number of eigenvalues of \eqref{dampedwaveequation2} outside the strip $\mathbf R + i ]C_\infty^--\varepsilon; C_\infty^+ + \varepsilon[$. 
\end{theo*}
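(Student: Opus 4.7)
Following the strategy of \cite{sjo00}, I would recast the spectral problem semiclassically. With $h = 1/\mathfrak{Re}(\tau)\to 0^+$ and $\beta = \mathfrak{Im}(\tau)$, equation \eqref{dampedwaveequation2} rewrites, after multiplication by $h^2$, as
\[
P_h^\beta v = 0,\qquad P_h^\beta := -h^2\Delta -1 + 2ih(a-\beta) + h^2 r(h,\beta),
\]
where $r$ is a uniformly bounded matrix remainder. The theorem then reduces to the semiclassical injectivity statement: \emph{for every $\beta > C_\infty^+ + \varepsilon$, $P_h^\beta$ has trivial kernel on $L^2(M;\mathbf{C}^n)$ for all sufficiently small $h$}. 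The symmetric case $\beta < C_\infty^- - \varepsilon$ follows by applying this upper-bound statement to $-a$, together with the eigenvalue bijection $\tau\leftrightarrow\bar\tau$ between the $a$- and $(-a)$-problems and the identity $C_\infty^+(-a) = -C_\infty^-(a)$, which is deduced from \eqref{eq:equationG'} after observing that the cocycle for $-a$ equals $(G_t^*)^{-1}$.

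\textbf{The cocycle as microlocal weight.} The heart of the proof is the construction of a microlocal weight from $G_T$. The cocycle relation $G_{s+t} = G_t\circ\phi_s\cdot G_s$ makes $t\mapsto tC_t^+$ sub-additive, so $C_t^+ \downarrow C_\infty^+$ and I can fix $T$ large enough that $C_T^+ < C_\infty^+ + \varepsilon/2$. Define
\[
Q_T(x,\xi) := G_T(x,\xi)^{-*}G_T(x,\xi)^{-1},
\]
a positive Hermitian matrix-valued symbol with $\|Q_T\|_2 \leq e^{2TC_T^+}$ on the characteristic set. Two identities follow from \eqref{eq:equationG'}: a Duhamel identity along the cocycle,
\[
\partial_t\bigl(G_t^{-*}G_t^{-1}\bigr) = a(x_t)G_t^{-*}G_t^{-1} + G_t^{-*}G_t^{-1}a(x_t),
\]
and its Hamiltonian counterpart, obtained via the cocycle relation,
\[
H_p Q_T = a(x_T)Q_T + Q_T a(x_T) - 2\,G_T^{-*}a(x_0)G_T^{-1}.
\]
These are the matrix analogues, in the non-commutative setting, of the scalar identity $H_pF_T = \langle a\rangle_T - a$ exploited by Sjöstrand.

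\textbf{Energy estimate and conclusion.} For $v \in \ker P_h^\beta$ with $\|v\|_{L^2}=1$, I would study the quadratic form $\bigl\langle\bigl[(P_h^\beta)^*\mathrm{Op}_h^w(Q_T) + \mathrm{Op}_h^w(Q_T)\,P_h^\beta\bigr]v,v\bigr\rangle = 0$. The scalar principal term contributes $2(p-1)Q_T$ (controlled by ellipticity of $P_0 := -h^2\Delta - 1$ off the characteristic set, allowing one to localise microlocally); at order $h$ one obtains $H_pQ_T$ from the principal commutator and $\{a,Q_T\} - 2\beta Q_T$ from the subprincipal $2ih(a-\beta)$. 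Substituting the Hamiltonian identity, the $a(x_T)$-anticommutators cancel and one is left with a positive contribution $2\,G_T^{-*}a\,G_T^{-1}$. A matrix-valued sharp Gårding inequality, together with an Egorov-type averaging of the cocycle over a flow-segment of length $T$, then produces an estimate of the form
\[
\beta \leq C_T^+ + \mathcal{O}(1/T) + \mathcal{O}(h\,e^{2TC_T^+}).
\]
Choosing $T = T(\varepsilon)$ large enough that $C_T^+ + \mathcal{O}(1/T) < C_\infty^+ + \varepsilon$ and then letting $h \to 0^+$ contradicts $\beta > C_\infty^+ + \varepsilon$.

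\textbf{Main obstacle.} The principal difficulty beyond the scalar treatment of \cite{sjo00} is the non-commutativity of the cocycle: no scalar conjugator $\exp(F_T/h)$ is available, the transport identity produces the anticommutator $\{a,Q_T\}$ rather than a scalar prefactor, and the Hamiltonian derivative brings in a conjugated damping $G_T^{-*}aG_T^{-1}$ that must be handled inside matrix-valued Weyl calculus --- in particular via a matrix version of the sharp Gårding inequality. A second, quantitative obstacle is that $\|Q_T\|$ grows like $e^{2TC_T^+}$, so closing the estimate forces a balance between $T$ and $h$ (typically $T \sim c\log(1/h)$); verifying uniformity of the matrix-valued pseudodifferential remainders in this regime is the main bookkeeping effort.
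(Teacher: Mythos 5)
First, a bookkeeping error: in the symmetrized form $\langle[(P_h^\beta)^*\mathrm{Op}_h^w(Q_T)+\mathrm{Op}_h^w(Q_T)P_h^\beta]v,v\rangle$ the $\beta$-dependence cancels identically, because $\beta$ is a real scalar and the two contributions $\mp 2ih\beta\,\mathrm{Op}_h^w(Q_T)$ add up to zero; the terms you list ($H_pQ_T$ and $-2\beta Q_T$) in fact live in the anti-symmetrized form $\tfrac1i\bigl(\mathrm{Op}_h^w(Q_T)P_h^\beta-(P_h^\beta)^*\mathrm{Op}_h^w(Q_T)\bigr)$, whose order-$h$ symbol is $H_pQ_T+2(aQ_T+Q_Ta)-4\beta Q_T$. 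Second, and fatally, in that correct form your Hamiltonian identity does \emph{not} produce the cancellation you claim: $H_pQ_T$ contributes anticommutators with $a(x_T)$, while the damping contributes anticommutators with $a(x_0)=a(x)$, and these do not cancel against each other. The scalar case $n=1$ already shows the scheme cannot work: there $Q_T=\exp\bigl(2\int_0^T a(x_s)\,ds\bigr)$, $H_pQ_T=2\bigl(a(x_T)-a(x_0)\bigr)Q_T$, and the total order-$h$ symbol is $2Q_T\bigl(a(x_0)+a(x_T)-2\beta\bigr)$. G\aa rding then only yields $\beta\le \sup_\rho\tfrac12\bigl(a(x_0)+a(x_T)\bigr)+o(1)$, a two-endpoint bound which does not tend to $C_\infty^+$ as $T\to\infty$ (it stays near $a^+$ whenever some orbit of length $T$ starts and ends near the maximum of $a$, e.g.\ along closed geodesics through that maximum). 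So the announced estimate $\beta\le C_T^++\mathcal O(1/T)+\mathcal O(h\,e^{2TC_T^+})$ does not follow, and the proposal does not even recover Sj\"ostrand's theorem for $n=1$.

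The reason is structural: in the scalar argument the weight must be the \emph{ramped} average $\exp\bigl(2\int_0^T(1-s/T)\,a(x_s)\,ds\bigr)$, whose generator satisfies $H_pF=-2\bigl(a-\langle a\rangle_T\bigr)$, so that the endpoint terms telescope into the time average; the endpoint weight $G_T^{-*}G_T^{-1}$ can only ever produce $a(x_0)$ and $a(x_T)$. For $n\ge 1$ the ramped weight has no cocycle expression and the exact transport identity is destroyed by non-commutativity --- this is precisely the obstruction the present paper points out when it says that Sj\"ostrand's conjugation/averaging argument ``cannot be modified in a straightforward manner'' in the vectorial case. Accordingly, Theorem \ref{th:1} is not reproved here at all: it is quoted from \cite{kle17} and \cite{thesis}, where it is obtained by Lebeau's method with (matrix-valued) microlocal defect measures. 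If you want a proof in the spirit of this paper's machinery, the natural substitute for your weight is the propagator approximation of Lemma \ref{lemmeetatcoherents}: an eigenfunction satisfies $e^{iT(\mathcal P-z)/h}v=v$, and comparing $\|v\|$ with $\bigl\|e^{-iTz/h}\mathrm{Op}^{\mathrm{AW}}_h\bigl(G_{2T}\circ j\circ\phi_T(x,\xi/2)\bigr)e^{-iTh\Delta}v\bigr\|$ from above and below yields $\mathfrak{Im}(\tau)\in[C^-_{2T}-o(1),\,C^+_{2T}+o(1)]$ for fixed large $T$, which is Theorem \ref{th:1}. Your semiclassical reduction and the reduction of the lower band edge to the upper one for $-a$ via $(G_t^*)^{-1}$ are fine; it is the energy-estimate core that must be replaced.
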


In the scalar case this was first proved by Lebeau \cite{leb93} using microlocal defect measures. In fact, because of the setting of his article, he only proved it for the upper bound and $a\leq 0$ but his proof easily extends to Theorem \ref{th:1} with $n=1$. Theorem \ref{th:1} was also proved by Sjöstrand in \cite{sjo00} for $n=1$ using different techniques. The argument of Sjöstrand relies on a conjugation by pseudo-differential operators to replace the damping term $a$ by its average on geodesics of length $T$. Because of the non commutativity of matrices it seems that this argument cannot be modified in a straight forward manner to prove the vectorial case ($n\geq1$). In \cite{kle17}, using the same technique as Lebeau, the upper bound of Theorem \ref{th:1} was proved in the general case $n\geq 1$ for a function $a$ valued in $\mathscr H_n^+(\mathbf C)$, the space of Hermitian positive semi-definite matrices. Once again the argument used there can easily be adapted to prove Theorem \ref{th:1} in its full generality (see \cite{thesis}). 

This result is the best possible in the sense that there can be infinitely many eigenvalues of \eqref{dampedwaveequation2} outside of the strip $\mathbf R + i [C_\infty^-; C_\infty^+]$. However we are going to show that in a way, ``most'' of the eigenvalues of \eqref{dampedwaveequation2} are in fact in a narrower strip. In order to give a meaning to the previous statement we need the following result.

\begin{theo*}\label{th:2}
The number of eigenvalues $\tau$ with $\mathfrak{Re}(\tau) \in [0;\lambda]$ is equivalent to 
\[
n\left(\frac{\lambda}{2\pi}\right)^d \iint_{p^{-1}([0;1])} 1 dxd\xi
\]
when $\lambda$ goes to $+\infty$. Moreover the remainder is a $\mathcal{O}(\lambda^{d-1})$.
\end{theo*}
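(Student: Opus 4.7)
My approach is to split the argument into the unperturbed case $a\equiv 0$ and a perturbative reduction to it. When $a=0$, equation \eqref{dampedwaveequation2} decouples into $n$ independent scalar eigenvalue problems $-\Delta v_k=\tau^2 v_k$, so the eigenvalues $\tau$ with $\mathfrak{Re}(\tau)\in[0;\lambda]$ are exactly the square roots $\sqrt{\mu_j}$ with $\mu_j\leq \lambda^2$, each appearing with multiplicity $n$, where $(\mu_j)_{j\geq 0}$ denotes the spectrum of $-\Delta$ on $M$ counted with multiplicity. The sharp Weyl law with Hörmander remainder for the Laplace--Beltrami operator on a closed Riemannian manifold then directly gives
\[
n\cdot \#\{j : \mu_j\leq \lambda^2\}=n\left(\frac{\lambda}{2\pi}\right)^d \iint_{p^{-1}([0;1])} dx\, d\xi + \mathcal{O}(\lambda^{d-1})\vv
\]
which is the claimed asymptotic when $a\equiv 0$.

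For general $a$, my plan is to reduce to the above by a perturbation argument. By Theorem~\ref{th:1} (or more elementarily by the energy identity \eqref{eq:formuleenergie''}) all eigenvalues of \eqref{dampedwaveequation2} lie in the horizontal strip $\mathfrak{Im}(\tau)\in[2a^-;2a^+]$, so the count in $\{\mathfrak{Re}(\tau)\in[0;\lambda]\}$ coincides with the count in the rectangle $R_\lambda=[0;\lambda]\times[2a^- - 1;\,2a^+ + 1]$, whose height is independent of $\lambda$. The operators $A_a$ and $A_0$ differ by the bounded perturbation $-2a$ in the lower-right block, so the quadratic pencils $P_a(\tau)=-\Delta+2ia\tau-\tau^2$ and $P_0(\tau)=-\Delta-\tau^2$ are holomorphic families of Fredholm operators of index zero sharing the same principal symbol. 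The two counting functions then admit contour-integral representations via regularized determinants on $\partial R_\lambda$, and the target statement reduces to controlling, by $\mathcal{O}(\lambda^{d-1})$, the trace of the contour integral of $P_a(\tau)^{-1}P_a'(\tau)-P_0(\tau)^{-1}P_0'(\tau)$ along $\partial R_\lambda$.

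The main obstacle I anticipate is precisely obtaining the sharp remainder $\mathcal{O}(\lambda^{d-1})$ rather than merely the leading-order agreement. On the two horizontal sides of $R_\lambda$ uniform resolvent bounds give only a contribution of order $\mathcal{O}(\lambda)$; the improvement must come from the two vertical sides, where the resolvent identity
\[
P_a(\tau)^{-1}=P_0(\tau)^{-1}-2i\tau\, P_0(\tau)^{-1}\, a\, P_a(\tau)^{-1}
\]
together with the semiclassical pseudodifferential calculus for $P_0(\tau)^{-1}$ in the regime $|\tau|$ large should yield the needed cancellation. Since the principal symbol of $P_a$ is scalar and equal to that of $P_0$, one can in principle mimic the scalar Weyl-law analysis and reduce the matrix-valued subprincipal contribution to a scalar calculation modulo lower-order commutators. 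Unlike the finer spectral-width statement of Theorem~\ref{th:1}, this bulk counting is insensitive to the non-commutativity of $a$, so the vectorial case should follow from the scalar arguments of~\cite{sjo00} with only minor modifications.
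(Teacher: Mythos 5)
The paper itself offers no proof of Theorem~\ref{th:2}: it cites the $n=1$ result of Markus--Matsaev~\cite{mama82} and Sjöstrand~\cite{sjo00} and asserts the adaptation to $n\geq 1$ is routine. Your proposal is in the same spirit as those references: reduce to the self-adjoint case $a\equiv 0$ and argue that the subprincipal perturbation $2ia\tau$ only shifts eigenvalues into a horizontal strip of bounded height, so the horizontal counting function is unchanged to sharp-remainder accuracy. Your handling of the unperturbed case is correct: the system decouples, each eigenvalue $\tau=\sqrt{\mu_j}$ of the pencil inherits multiplicity $n$ times that of $\mu_j$ for $-\Delta$, and Hörmander's sharp Weyl law supplies the $\mathcal{O}(\lambda^{d-1})$ remainder. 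Your remark that this count is blind to the non-commutativity of $a$ is exactly why the paper regards the $n\geq 1$ adaptation as routine.

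The genuine gap is that the hard step, the $\mathcal{O}(\lambda^{d-1})$ remainder for general $a$, remains a plan rather than a proof. Two technical obstructions deserve naming. First, $P_a(\tau)^{-1}P_a'(\tau)$ is not trace class on $L^2(M)^n$ once $d\geq 2$, so the contour integral of the logarithmic derivative around $\partial R_\lambda$ must be regularized; the cited references handle this via Keldysh or regularized Fredholm determinants of operators that \emph{are} trace class, much like the $D(z)$ constructed in the present paper's proof of Theorem~\ref{th:3}. Second, your crude horizontal-side estimate of $\mathcal{O}(\lambda)$ already exceeds $\mathcal{O}(\lambda^{d-1})$ when $d=1$ and is merely comparable when $d=2$, so the cancellation you appeal to via the resolvent identity and symbol calculus for $P_0(\tau)^{-1}$ is the entire content of the remainder estimate and needs to be carried out, not merely invoked. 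As a reduction to the known literature the proposal is sound and honestly flags its own incompleteness, but it is not yet a self-contained argument.
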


For $n=1$ this result was first proved in \cite{mama82} by Markus and Matsaev and then independently in \cite{sjo00} by Sjöstrand. Once again the proof can easily be adapted to our case $n\geq 1 $.

\paragraph*{}We now want to make further estimations of the asymptotics of the imaginary part of the eigenvalues of \eqref{dampedwaveequation2}. Recall that the matrix $G$ is a cocycle and that the geodesic flow on $p^{-1}(1/2)$ preserves Liouville's measure. Thus we can use Kingman's subadditive ergodic theorem to show that the limits  
\[
\lambda_n(x,\xi)=\lim_{t\to\infty}\frac{1}{t} \log\left\|G_t(x,\xi)\right\|_2 \;\text{ and }\; \lambda_1(x,\xi)=\lim_{t\to \infty} \frac 1 {t} \log\left( \left\|G_t(x,\xi)^{-1}\right\|_2^{-1}\right)
\] 
exist for almost every $(x,\xi)\in p^{-1}(1/2)$. Moreover the functions $(x,\xi)\mapsto \lambda_1(x,\xi)$ and $(x,\xi)\mapsto \lambda_n(x,\xi)$ are both measurable and bounded. Note that the existence and the value of the limit does not depend on the choice of the norm because they are all equivalent. Note also that $\lambda_1$ and $\lambda_n$ are respectively the smallest and largest Lyapunov exponents defined by the multiplicative ergodic theorem of Oseledets. The statement of Oseledets theorem can be found in Annex \ref{an:oseledets}. We now define 
\[
\Lambda^- = \mathrm{ess} \inf \lambda_1(x,\xi) \text{ and } \Lambda^+=\mathrm{ess} \sup \lambda_n(x,\xi)\pp
\]
In the scalar case $n=1$ so obviously $\lambda_1=\lambda_n$ and we have
\[
\Lambda^+=-\underset{(x,\xi)\in p^{-1}(1/2)}{\mathrm{ess \; inf}} \lim_{t\to \infty}  \frac{1}{t}\int_0^t a(x_s) ds \; \text{ and } \; \Lambda^-=-\underset{(x,\xi)\in p^{-1}(1/2)}{\mathrm{ess \; sup}} \lim_{t\to \infty} \frac 1 t  \int_0^t a(x_s) ds\pp
\] 
Notice the minus sign in comparison to the definition of $C_\infty^\pm$. In general we have 
\[
C_\infty^- \leq -\Lambda^+ \leq -\Lambda^- \leq C_\infty^+
\]
and every inequality is usually strict. We are  now ready to state the main result of this article.

\begin{theo*}\label{th:3}
For every $\varepsilon>0$ the number of eigenvalues $\tau$ satisfying $\mathfrak{Re}(\tau)\in [\lambda ; \lambda+1]$ and $\mathfrak{Im}(\tau)\notin ]-\Lambda^+ -\varepsilon ; -\Lambda^-+\varepsilon[$ is $o(\lambda^{d-1})$ when $\lambda$ tends to infinity.
\end{theo*}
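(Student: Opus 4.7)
The plan is to adapt Sjöstrand's strategy from \cite{sjo00} to the vectorial setting, with the matrix cocycle $G_T$ playing the role of the scalar Birkhoff averages $\frac{1}{T}\int_0^T a(x_s)\,ds$. Although the non-commutativity of the matrices $a(x_s)$ at different times prevents one from conjugating the damping away by a single pseudodifferential operator, one can still compare the propagated $L^2$-norm of an eigenfunction to the operator norm of the cocycle along its microsupport, thereby constraining the imaginary parts of eigenvalues in terms of the Lyapunov exponents.

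The preparation is dynamical. Fix $\varepsilon>0$. Oseledets' theorem (cf.\ Annex \ref{an:oseledets}) together with Kingman's subadditive ergodic theorem supplies, for every $\delta>0$, a time $T_0$ and a measurable set $K\subset p^{-1}(1/2)$ of Liouville measure at least $1-\delta$ such that, for every $T\geq T_0$ and every $(x,\xi)\in K$,
\[
\frac{1}{T}\log\|G_T(x,\xi)\|_2 \leq \Lambda^+ + \tfrac{\varepsilon}{3},\qquad \frac{1}{T}\log\|G_T(x,\xi)^{-1}\|_2^{-1}\geq \Lambda^- - \tfrac{\varepsilon}{3}\pp
\]
Fix such a $T$ and a scalar cutoff $\chi\in C_c^\infty(T^*M)$ with $\chi\equiv 1$ on $K$ and support close to $K$.

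Next, I would construct a matrix-valued semiclassical half-wave damped propagator $U(T)$: a matrix Fourier integral operator whose underlying canonical transformation is $\phi_T$ and whose principal matrix symbol is $G_T(x,\xi)$, the defining feature being that the subprincipal transport equation for the damped half-wave operator reduces to the cocycle ODE \eqref{eq:equationG'}. The matrix-valued Egorov lemma that comes out reads
\[
U(T)^*\,\mathrm{Op}_h(b)\,U(T) = \mathrm{Op}_h\bigl(G_T^*\,(b\circ \phi_T)\,G_T\bigr) + \mathcal{O}_{L^2}(h)
\]
for every matrix-valued symbol $b$. Any $L^2$-normalized eigenfunction $v_\tau$ of \eqref{dampedwaveequation2} with $\mathfrak{Re}(\tau)\in[\lambda,\lambda+1]$ is microlocally supported on $p^{-1}(1/2)$ (with $h=1/\lambda$) and satisfies $U(T)v_\tau = e^{iT\tau}v_\tau + \mathcal{O}(h^\infty)$. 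Testing the Egorov identity with $b=\chi$ on $v_\tau$ then yields
\[
e^{-2T\,\mathfrak{Im}(\tau)}\,\langle \mathrm{Op}_h(\chi)v_\tau,v_\tau\rangle = \langle \mathrm{Op}_h\bigl(G_T^*\,(\chi\circ\phi_T)\,G_T\bigr)v_\tau,v_\tau\rangle + o(1)\pp
\]
When $\mathfrak{Im}(\tau)\notin\,]-\Lambda^+-\varepsilon,-\Lambda^-+\varepsilon[$, the cocycle bounds on $K$ make the right-hand side exponentially smaller, or larger, than the left-hand side, forcing $\langle \mathrm{Op}_h(\chi)v_\tau,v_\tau\rangle = o(1)$; equivalently, $v_\tau$ is microlocally concentrated in $K^c$.

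The counting step then invokes a local Weyl law: the argument of Theorem \ref{th:2}, applied with a cutoff supported in $K^c$, bounds the number of eigenvalues $\tau$ with $\mathfrak{Re}(\tau)\in[\lambda,\lambda+1]$ whose eigenfunctions carry nontrivial microlocal mass outside $K$ by $Cn\delta\lambda^{d-1}+o(\lambda^{d-1})$. Since $\delta$ can be chosen arbitrarily small, this gives the conclusion. The main obstacle is the matrix Egorov lemma: tracking how non-commuting symbols are conjugated under the damped propagator requires solving all orders of the subprincipal transport expansion and estimating the remainders uniformly as $T$ grows together with $\lambda$. A secondary obstacle is making the local Weyl remainder genuinely $o(\lambda^{d-1})$ rather than $\mathcal{O}(\lambda^{d-1})$, which will likely require a careful limiting argument exchanging $\delta\to 0$ and $\lambda\to\infty$.
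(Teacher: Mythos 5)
Your dynamical preparation (Oseledets/Kingman giving a set $K$ of Liouville measure $\geq 1-\delta$ on which $\frac{1}{T}\log\|G_T\|$ and $\frac{1}{T}\log\|G_T^{-1}\|^{-1}$ are controlled for $T$ large) and your idea that the damped propagator should factor, to leading order, as the free propagator times an anti-Wick quantization of the cocycle $G_T$ are both genuinely present in the paper: the latter is precisely Lemma \ref{lemmeetatcoherents}, proved via coherent states following Robert. So the microlocal half of your plan is sound.

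The gap is in the counting step. You propose to show that each eigenfunction $v_\tau$ with $\mathfrak{Im}(\tau)$ outside the narrow strip has $o(1)$ microlocal mass on $K$, and then to invoke ``a local Weyl law'' with cutoff supported in $K^c$ to bound the number of such $\tau$ by $\mathcal O(\delta\lambda^{d-1})$. But $\mathcal P(z)$ is non-self-adjoint, so its eigenfunctions do not form an orthonormal (or even Riesz) basis, and there is no quasimode-orthogonality argument available; the number of eigenvalues in a given region is not controlled by the trace of a spectral cutoff in the way the self-adjoint Weyl law would suggest. Concretely: the bound ``each eigenfunction carries most of its mass on a set of measure $\delta$'' cannot be summed over eigenfunctions without controlling the condition number of the basis they form, and for non-normal operators this can be arbitrarily bad (pseudospectral effects). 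The pigeonhole step you are implicitly relying on simply is not available here, and this is exactly why Sjöstrand's technique exists. The paper circumvents this by never arguing eigenfunction by eigenfunction: it builds an approximate left inverse $R(z)=A_1+A_2+A_3$ for $\mathcal P-z$ (Proposition \ref{prop:ResolvanteApprochee}), obtaining $R(z)(\mathcal P-z)=\mathrm{Id}+R_1(z)+R_2(z)$ with $\|R_1\|_{L^2}<1/2$ and $\|R_2\|_{\mathrm{tr}}=o(h^{1-d})$, packages the eigenvalues as zeros of the holomorphic Fredholm determinant $D(z)=\det(1+K(z))$ with $\log|D(z)|\leq\|K(z)\|_{\mathrm{tr}}$, and then counts zeros via Jensen's formula on a conformal image of $\widetilde\Omega$. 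The trace-norm bound $o(h^{1-d})$ does the work that your ``local Weyl law'' was hoping to do, and it is obtained precisely by putting the small piece of phase space (the anti-Wick quantization of $1-\chi_K^{(T)}$, whose symbol is supported where the cocycle is uncontrolled) into the trace-class remainder $R_2$, while the rest of $R(z)(\mathcal P-z)-\mathrm{Id}$ is made small in operator norm using the propagator/cocycle estimate. Without this determinant--Jensen mechanism your argument does not close: it proves microlocal concentration of individual eigenfunctions but not a counting bound.

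A secondary comment: your ``matrix Egorov lemma'' in the form $U(T)^*\mathrm{Op}_h(b)U(T)=\mathrm{Op}_h(G_T^*(b\circ\phi_T)G_T)+\mathcal O(h)$ is not what the paper proves or needs. The paper's Lemma \ref{lemmeetatcoherents} gives a one-sided factorization $e^{iT\mathcal P/h}\approx \mathrm{Op}^{\mathrm{AW}}_h(G_{2T}\circ j\circ\phi_T(x,\xi/2))\,e^{-iTh\Delta}$, valid after composing on the left with a compactly supported cutoff, which is what is needed to bound $\|\mathrm{Op}^{\mathrm{AW}}_h(\chi_K^{(T)})e^{iT(\mathcal P-z)/h}\|_{L^2}$ by $e^{-T\varepsilon/2}$. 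The conjugated-symbol form you wrote is harder to control and not obviously true at $\mathcal O(h)$ when $T$ and $h$ are coupled; it is also not needed.
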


In view of Theorem \ref{th:2} and Theorem \ref{th:3} we see that, up to a null density subset, all of the eigenvalues of \eqref{dampedwaveequation2} have their imaginary part in the interval $]-\Lambda^+ - \varepsilon; -\Lambda^-+\varepsilon[$. Theorem \ref{th:3} was proved by Sjöstrand \cite{sjo00} when $n=1$ and the asymptotics was then refined by Anantharaman for a negatively curved manifold in \cite{ana10}. The main goal of this article is to prove Theorem \ref{th:3} in the general case $n\geq 1$. As for Theorem \ref{th:1} the arguments used by Sjöstrand in \cite{sjo00} seems not to work anymore when $n\geq 1$. This mainly comes from the fact that matrices do not commute and thus formula \eqref{eq:equationGscalaire} is no longer true when $n\geq 1$. 

\subsection{Two open questions}
If we take $n=1$ then there is only one Lyapunov exponent for $G$ which is simply the opposite of the Birkhoff average of $a$ :
\[\lambda_1(x_0,\xi_0)=\lim_{t\to \infty} \frac{-1}{t}\int_0^t a(x_s) ds \pp\]
If we make the assumption that the geodesic flow is ergodic on $M$ we get
\[
\Lambda^-=\Lambda^+=\frac{-1}{\mathrm{vol}(M)}\int_M a(x)dx = \lambda_1(x,\xi) \;\text{ a.e.}
\]
and Theorem \ref{th:3} tells us that most of the eigenvalues of $A_a$ are concentrated around the vertical line of imaginary part $\frac{-1}{\mathrm{vol}(M)}\int_M a(x) dx$. Now if we drop the assumption $n=1$ and keep the ergodic assumption we do not necessarily have $\Lambda^+=\Lambda^-$ but the Lyapunov exponents $\lambda_i$ defined by Theorem \ref{th:oseledets} will be constant almost everywhere. We write $\lambda_i$ for the almost sure value of the function $(x,\xi) \mapsto \lambda_i(x,\xi)$ and we thus have $\lambda_1=\Lambda^-$ and $\lambda_n=\Lambda^+$. Theorem \ref{th:3} tells us that most of the eigenvalues of $A_a$ will be concentrated around the strip $\{z\in \mathbf C : \mathfrak{Im}(z)\in[-\lambda_n;-\lambda_1]\}$ but it seems natural to ask if the following stronger property holds.
\begin{question}
Is it true that for every $\varepsilon>0$ the number of eigenvalues $\tau$ satisfying $\mathfrak{Re}(\tau)\in [\lambda ; \lambda+1]$ and \[\mathfrak{Im}(\tau)\notin \bigcup_{i=1}^n]\lambda_i -\varepsilon ; \lambda_i+\varepsilon[\] is a $o(\lambda^{d-1})$ when $\lambda$ tends to infinity ?
\end{question}
It seems that at the present time, the techniques developed in this article do not allow us to answer this question. It was however proved in \cite{thesis} that the answer is yes when the manifold $M$ is just a circle. The proof for this result relies on microlocal deffect measures and the fact that on the circle the Lyapunov exponents are constant everywhere and not almost everywhere.

\begin{question}
If the answer to the first question is yes, is it true that the eigenvalues are equally distributed between the Lyapunov exponents ? 
\end{question}

Let us rephrase this more precisely. Let $\lambda_1 \leq \ldots \leq \lambda_n$ be the constant almost everywhere Lyapunov exponents of $G$. We want to know if for $\varepsilon	>0$ small enough and for every $i\in \{1,\ldots, n \}$ the number of eigenvalues of \eqref{dampedwaveequation2} in the box $[0;\lambda]+i]\lambda_i-\varepsilon ; \lambda_i + \varepsilon[$ is equivalent to
\[ k_i \left(\frac{\lambda}{2\pi}\right)^d \iint_{p^{-1}([0;1])} 1  dx d \xi
\]
when $\lambda $ goes to $+\infty$ and where $k_i$ is the multiplicity of the Lyapunov exponent $\lambda_i$.

Unfortunately the techniques used to answer Question 1 for $M= \R/2\pi\Z$ are not powerful enough to answer Question 2 even in that simple setting. 

\subsection{Plan of the article}
\paragraph*{}Section \ref{section:preuvetheorem3} is dedicated to the proof of Theorem \ref{th:3} which starts by a semi-classical reduction. The general idea of the proof is to express the eigenvalues of \eqref{dampedwaveequation2} as zeros of some Fredholm determinant depending holomorphically in $z$ and then to use Jensen's formula to bound the number of these zeros. 

In order to construct the aforementioned Fredholm determinant and to get the appropriate bound we need to construct some approximate resolvent for $-\Delta - \tau^2 +2ia(x)\tau$, this is the object of Proposition \ref{prop:ResolvanteApprochee}. The proof of Proposition \ref{prop:ResolvanteApprochee} is postponed to Section \ref{section:preuvepropresolvanteapprochee} and represents the core of this article. Section \ref{section:preuvepropresolvanteapprochee} starts by a sketch of the proof of Proposition \ref{prop:ResolvanteApprochee} for easier understanding.

The article then ends with two annexes. The first one presents the semi-classical anti-Wick quantization and its basic properties. The second annex presents the multiplicative ergodic theorem of Oseledets.

\subsection*{Acknowledgments}
This article is for the vast majority a reproduction of the last chapter of my PhD Thesis (see \cite{thesis}). I would like to thank again my advisor, Nalini Anantharaman. There is no doubt that this article would not have seen the light of day without her help and support. 

\section{Proof of Theorem \ref{th:3}}\label{section:preuvetheorem3}
\paragraph*{} The first step of the proof is to perform a semi-classical reduction borrowed from \cite{sjo00}. Recall from the Introduction that $i\tau$ is an eigenvalue of $A_a$ if and only if there exists some non zero $v:M \to \mathbf C^n$ such that 
\[
(-\Delta -\tau^2+2ia\tau)v=0\pp
\]
We are interested in the asymptotic behaviour of the eigenvalues of $A_a$ and since its spectrum is invariant by complex conjugation we can restrict ourself to the case $\mathfrak{Re}(\tau) \to +\infty$. Let us call $h$ our semiclassical parameter tending to zero and let $i\tau$ be an eigenvalue of $A_a$, depending on $h$, such that $h\tau=1+o(1)$ when $h$ goes to zero. If we write $\tau=\kappa/h$ the previous equation becomes
\[
(-h^2\Delta-\kappa^2+2ia\kappa h)v=0\pp
\]
Now if we write $z=\kappa^2$, and $\kappa=\sqrt{z}$ with $\mathfrak{Re}(z)>0$ the equation becomes
\[
(-h^2\Delta+2iha\sqrt{z}-z)v=0\pp
\]
We might finally rewrite it as 
\begin{equation}
(\mathcal P -z ) v=0
\end{equation}
with $\mathcal{P}=\mathcal{P}(z)=P+ihQ(z)$, where $P=-h^2\Delta$ is the semiclassical Laplacian and $Q(z)=2a\sqrt{z}$. Note that $P$ is self adjoint, $Q$ depends holomorphically on $z$ in a neighbourhood of $1$ and it is self adjoint whenever $z$ is a positive real number. Throughout the rest of the article we will use differential operators depending on the semi-classical  parameter $h$, an exposition of the theory of $h$-pseudo-differential operators is given for example in \cite{zwo12}.

\begin{remarque}
Notice that $z=\kappa^2$ and that $(1+x)^2=1+2x+o(x)$ so we have \[h^{-1}\mathfrak{Im}(z) = h^{-1}2\mathfrak{Im}(\kappa)+o(1) = 2\mathfrak{Im}(\tau)+o(1)\pp\]
This explains the appearance of some multiplications by two in the rest of the article.
\end{remarque}

\paragraph*{} According to this semi-classical reduction, finding an upper bound on the number of eigenvalues of $\mathcal{P}$ in an open set $1+h\widetilde\Omega$ yields an upper bound on the number of eigenvalues of  \eqref{dampedwaveequation2} in an open set $h^{-1}+\widetilde \Omega/2+o(1)$.

\begin{definition*}
Let $\varepsilon>0$ be fixed, we then put
\[
\widetilde\Omega=\{z\in \mathbf C : \mathfrak{Re}(z)\in ]-2;2[, \; \mathfrak{Im}(z)\in ]2a^- -3;-\Lambda^+-\varepsilon/2[\}
\] 
\[
\widetilde\omega=\{z\in \mathbf C : \mathfrak{Re}(z)\in ]-1;1[, \; \mathfrak{Im}(z)\in ]2a^- -2;-\Lambda^+-\varepsilon[\}
\]
\[
\widetilde{z_0}=i(2a^--1)\pp
\]

\end{definition*}

\begin{figure}[ht]
\definecolor{uuuuuu}{rgb}{0.26666666666666666,0.26666666666666666,0.26666666666666666}
\begin{center}
\begin{tikzpicture}[line cap=round,line join=round,>=triangle 45,x=0.4cm,y=0.4cm]
\clip(-7.652192721672691,-22.875768698947027) rectangle (19.200530641808154,4.559022367499436);
\fill[fill=black,fill opacity=0.15000000596046448] (-0.8700322697098821,-1.1528803749783612) -- (11.55176600808043,-1.1528803749783612) -- (11.55176600808043,-20.10109733567344) -- (-0.8700322697098821,-20.10109733567344) -- cycle;
\fill[fill=black,fill opacity=0.15000000596046448] (2.235417299737696,-2.141866847318108) -- (8.446316438632852,-2.141866847318108) -- (8.446316438632852,-16.995647766225858) -- (2.235417299737696,-16.995647766225858) -- cycle;
\draw [<->][>=stealth][line width=0.7pt,domain=-7.652192721672691:19.200530641808154] plot(\x,{(--0.7362204647058319-0.*\x)/0.5382779253709131});
\draw [<->][>=stealth][line width=0.7pt] (5.340866869185273,-22.875768698947027) -- (5.340866869185273,4.559022367499436);
\draw [domain=-7.652192721672691:19.200530641808154] plot(\x,{(--0.08822046989325452-0.*\x)/-0.5382779253709131});
\draw [domain=-7.652192721672691:19.200530641808154] plot(\x,{(--2.1944731818991396-0.*\x)/-0.5382779253709131});
\draw [domain=-7.652192721672691:19.200530641808154] plot(\x,{(--5.8051921167663725-0.*\x)/-0.5382779253709131});
\draw (-0.8700322697098821,-1.1528803749783612)-- (11.55176600808043,-1.1528803749783612);
\draw (-0.8700322697098821,-1.1528803749783612)-- (11.55176600808043,-1.1528803749783612);
\draw (11.55176600808043,-1.1528803749783612)-- (11.55176600808043,-20.10109733567344);
\draw (11.55176600808043,-20.10109733567344)-- (-0.8700322697098821,-20.10109733567344);
\draw (-0.8700322697098821,-20.10109733567344)-- (-0.8700322697098821,-1.1528803749783612);
\draw (2.235417299737696,-2.141866847318108)-- (8.446316438632852,-2.141866847318108);
\draw (8.446316438632852,-2.141866847318108)-- (8.446316438632852,-16.995647766225858);
\draw (8.446316438632852,-16.995647766225858)-- (2.235417299737696,-16.995647766225858);
\draw (2.235417299737696,-16.995647766225858)-- (2.235417299737696,-2.141866847318108);
\begin{scriptsize}
\draw[color=black] (5.8,1.9) node {\normalsize $0$};
\draw[color=black] (2.8,1.9) node {\normalsize $-1$};
\draw[color=black] (-0.2,1.9) node {\normalsize $-2$};
\draw[color=black] (8.8,1.9) node {\normalsize $1$};
\draw[color=black] (12,1.9) node {\normalsize $2$};
\draw[color=black] (15,1.9) node {\normalsize $3$};
\draw[color=black] (-3.3,1.9) node {\normalsize $-3$};
\draw [color=black][line width=.7pt] (5.340866869185273,1.3677329684261563)-- ++(-2.0pt,0 pt) -- ++(4.0pt,0 pt) ++(-2.0pt,-2.0pt) -- ++(0 pt,4.0pt);
\draw [color=black][line width=.7pt] (8.44631643863285,1.3677329684261563)-- ++(-2.0pt,0 pt) -- ++(4.0pt,0 pt) ++(-2.0pt,-2.0pt) -- ++(0 pt,4.0pt);
\draw [color=black][line width=.7pt] (2.2354172997376955,1.367732968426156)-- ++(-2.0pt,0 pt) -- ++(4.0pt,0 pt) ++(-2.0pt,-2.0pt) -- ++(0 pt,4.0pt);
\draw [color=black][line width=.7pt] (-0.870032269709882,1.367732968426156)-- ++(-2.0pt,0 pt) -- ++(4.0pt,0 pt) ++(-2.0pt,-2.0pt) -- ++(0 pt,4.0pt);
\draw [color=black][line width=.7pt] (11.55176600808043,1.367732968426156)-- ++(-2.0pt,0 pt) -- ++(4.0pt,0 pt) ++(-2.0pt,-2.0pt) -- ++(0 pt,4.0pt);
\draw [color=black][line width=.7pt] (-3.9754818391574593,1.367732968426156)-- ++(-2.0pt,0 pt) -- ++(4.0pt,0 pt) ++(-2.0pt,-2.0pt) -- ++(0 pt,4.0pt);
\draw [color=black][line width=.7pt] (14.657215577528014,1.367732968426156)-- ++(-2.0pt,0 pt) -- ++(4.0pt,0 pt) ++(-2.0pt,-2.0pt) -- ++(0 pt,4.0pt);
\draw[color=black] (-6.5,-0.85) node {\large $-\Lambda^+$};
\draw[color=black] (-6.35,-4.75) node {\large $C_\infty^-$};
\draw[color=black] (-6.35,-11.3) node {\large $2a^-$};
\draw [color=black] (5.340866869185273,-13.89019819677828)-- ++(-2.0pt,0 pt) -- ++(4.0pt,0 pt) ++(-2.0pt,-2.0pt) -- ++(0 pt,4.0pt);
\draw[color=black] (6.2,-13.8) node {\large $\widetilde z_0$};
\draw[color=black] (10,-7.8) node {\Large $\widetilde\Omega$};
\draw[color=black] (7,-7.8) node {\Large $\widetilde\omega$};

\draw[color=black] (7.4,-0.68) node {\normalsize $\varepsilon/2$};
\draw[color=black] (8.9,-1.65) node {\normalsize $\varepsilon/2$};
\draw[<->][>=stealth][line width=.5pt] (6.5,-0.18) -- +(0,-0.98);
\draw[<->][>=stealth][line width=.5pt] (8,-1.15) -- +(0,-0.98);

\end{scriptsize}
\end{tikzpicture}
\end{center}
\caption{Drawing of $\widetilde \Omega$, $\widetilde \omega$ and $\widetilde{z_0}$.}\label{figureomegah}
\end{figure}

We are going to prove that the number of eigenvalues of $\mathcal P$ in $\omega_h=1+2h\widetilde \omega$ is a $o(h^{1-d})$. Since there are no eigenvalues of \eqref{dampedwaveequation2} with imaginary part smaller than $2a^-$ this will prove that for every $\varepsilon>0$ the number of eigenvalues $\tau$ satisfying $\mathfrak{Re}(\tau)\in [h^{-1}-1 ; h^{-1}+1]$ and $\mathfrak{Im}(\tau) \leq -\Lambda^+-\varepsilon$ is a $o(h^{1-d})$ when $h$ tends to zero. The proof is exactly the same for eigenvalues satisfying $\mathfrak{Im}(\tau) \geq -\Lambda^- +\varepsilon$ and this will thus prove Theorem \ref{th:3}.

\paragraph*{} The key ingredient here is the next proposition but, in order to improve clarity, its proof is postponed until the next section. 

\begin{proposition}\label{prop:ResolvanteApprochee}
For every complex number $z$ in $\Omega_h=1+2h\widetilde\Omega$ there exists an operator $R(z)\in \mathcal L (L^2)$ depending holomorphically on $z\in\Omega_h$ such that $R(z)(\mathcal P-z)=\mathrm{Id} +R_1(z)+R_2(z)$ where $R_1,R_2\in \mathcal L (L^2)$, $\|R_1(z)\|_{L^2}<1/2$ and $\|R_2(z)\|_{\mathrm{tr}}= o(h^{1-d})$. Moreover for $z_0=1+2h \widetilde{z_0}\in \omega_h$ the operator $R(z_0)(\mathcal P(z_0) - z_0)$ is invertible in $L^2$ and $\left\| \left(R(z_0)(\mathcal P(z_0) - z_0)\right)^{-1}\right\|_{L^2}$ is uniformly bounded in $h$.
\end{proposition}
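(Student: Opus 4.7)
The plan is to build $R(z)$ by splitting phase space into an elliptic region and a region concentrated near the characteristic set $p^{-1}(1/2)$. In the elliptic region where $|p(x,\xi)-1|$ is bounded below, the full symbol $p-z+2ih a\sqrt{z}$ is invertible uniformly in $h$ (since $\mathrm{Re}(z)=1+O(h)$ and $\mathrm{Im}(z)=O(h)$), so the usual semi-classical pseudo-differential calculus yields a parametrix $R_{\mathrm{ell}}(z)$ with an $O(h^\infty)$-residue that is easily absorbed into $R_2(z)$. The whole difficulty therefore concentrates in a fixed neighbourhood of $p^{-1}(1/2)$, where the construction must use dynamical information on the cocycle $G_t$.

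On that characteristic region, I would represent the approximate inverse as a finite-time integral of an approximate propagator,
\[
R_{\mathrm{char}}(z)=\frac{i}{h}\int_0^T U_t(z)\,\dd t\vv
\]
where $U_t(z)$ is an operator whose anti-Wick symbol is constructed so as to equal $G_t(x,\xi)\,e^{-it(p(x,\xi)-z)/h}$ up to $O(h)$, thus solving $i h\partial_t U_t=(\mathcal P(z)-z)U_t$ modulo controlled errors. Formal differentiation then gives
\[
R_{\mathrm{char}}(z)(\mathcal P(z)-z)=\mathrm{Id}-U_T(z)+(\text{subprincipal errors})\vv
\]
and I would absorb the subprincipal errors, which are $O(h)$ in operator norm, into $R_1(z)$ (using also a cutoff to move to the elliptic region outside the characteristic set). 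The boundary term $U_T(z)$ becomes $R_2(z)$. To bound its trace norm, split phase space according to whether $\|G_T(x,\xi)\|_2\leq e^{T(\Lambda^++\varepsilon/4)}$ or not. On the good set, the prefactor $e^{T\,\mathrm{Im}(z)/h}$ combined with $\mathrm{Im}(z)/h<-2\Lambda^+-\varepsilon$ yields exponential decay in $T$ and in particular $\|\cdot\|_{\mathrm{tr}}=o(h^{1-d})$ after fixing $T$ large; on the complement, the Liouville measure of the bad set tends to zero as $T\to\infty$ by Oseledets' theorem applied to $\lambda_n$, and the anti-Wick calculus bounds the trace norm of the quantization of a bounded symbol supported on a set of measure $\mu_T$ by $C\mu_T h^{1-d}$. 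Choosing first $T$ large and then letting $h\to0$ gives $\|R_2(z)\|_{\mathrm{tr}}=o(h^{1-d})$.

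The main obstacle is exactly the step replaced by the scalar commutativity miracle in Sjöstrand's argument: since $a(x_s)$ and $a(x_t)$ no longer commute, the explicit averaging of \eqref{eq:equationGscalaire} fails, and the cocycle $G_t$ must be inserted into the operator calculus as a matrix-valued symbol while maintaining positivity and norm control. This is why I rely on the anti-Wick quantization recalled in the first annex: it maps non-negative Hermitian symbols to non-negative operators, controls operator norms by supremum norms and trace norms by $L^1$-norms of symbols, and handles composition with $\mathcal P(z)$ with an $O(h)$ remainder via a matrix-valued Egorov argument along the geodesic flow. Finally, for the invertibility claim at $z_0=1+2hi(2a^--1)$, the choice $\mathrm{Im}(\widetilde z_0)=2a^--1$ yields an accretive lower bound
\[
\mathrm{Im}\langle(\mathcal P(z_0)-z_0)u,u\rangle\geq h\|u\|^2
\]
because $a(x)\geq a^-\,\mathrm{Id}_n$ pointwise, so that $\mathcal P(z_0)-z_0$ is invertible with $\|(\mathcal P(z_0)-z_0)^{-1}\|_{L^2}=O(h^{-1})$; since $R(z_0)$ has been built so that $R(z_0)(\mathcal P(z_0)-z_0)=\mathrm{Id}+R_1(z_0)+R_2(z_0)$ with $\|R_1(z_0)\|<1/2$ and $R_2(z_0)$ compact of small trace norm, a standard Fredholm/Neumann series argument, together with the absence of eigenvalues of $\mathcal P$ below the line $\mathrm{Im}(z)/h=4a^-$, forces $R(z_0)(\mathcal P(z_0)-z_0)$ to be invertible with norm uniformly bounded in $h$.
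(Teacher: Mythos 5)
Your overall architecture (an elliptic parametrix away from the energy shell, a finite-time integral of a propagator near it, a good/bad splitting governed by Oseledets, then a trace bound with $T$ chosen large before $h\to0$) matches the paper's, but three of your steps would not go through as written. The central one is the construction of $U_t(z)$. The symbol $G_t(x,\xi)e^{-it(p(x,\xi)-z)/h}$ oscillates at scale $h$, so it lies in no symbol class in which the anti-Wick (or Weyl) calculus yields an approximate solution of $ih\partial_t U_t=(\mathcal P-z)U_t$; in particular $e^{-ith\Delta}$ is a Fourier integral operator and is not the quantization of $e^{-itp/h}$ up to small errors. Moreover, even granting a per-time error of size $\mathcal O(h)$, your formula $R_{\mathrm{char}}(z)=\frac{i}{h}\int_0^T U_t(z)\,dt$ divides by $h$ and integrates over $[0,T]$, so the accumulated error is $\mathcal O(T)$, not $\mathcal O(h)$: it cannot be absorbed into $R_1$, and you would need per-time accuracy $o(h)$, i.e.\ a genuine FIO or coherent-state parametrix. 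The paper sidesteps exactly this: the exact propagator is kept inside the time integral, so that $\frac{-i}{h}\int_0^T e^{it(\mathcal P-z)/h}dt\,(\mathcal P-z)=\mathrm{Id}-e^{iT(\mathcal P-z)/h}$ holds with no error, and only the single boundary term at time $T$ is approximated (Lemma \ref{lemmeetatcoherents}) in the factorized form $\mathrm{Op}^{\mathrm{AW}}_h(G_{2T}\circ j\circ\phi_T(x,\xi/2))\,e^{-iTh\Delta}+\mathcal O_T(h^{1/2})$, where the free propagator remains an exact unitary factor rather than a quantized oscillatory symbol.

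Two further steps are genuine gaps. First, the trace estimate: the anti-Wick calculus only gives $\|\mathrm{Op}^{\mathrm{AW}}_h(b)\|_{\mathrm{tr}}\lesssim h^{-d}\|b\|_{L^1}$, so quantizing a bounded symbol supported, within a fixed neighbourhood of the shell, on a bad set of small Liouville measure yields $\varepsilon(T)h^{-d}$, not your claimed $C\mu_T h^{1-d}$. Gaining the extra factor $h$ is precisely why the paper sandwiches the cutoff between two factors $f\left(\frac{P-1}{Ch}\right)$ and uses the semiclassical trace asymptotics under the condition that $\mathrm{supp}\,\widehat{f^2}(C\cdot)$ lies below the minimal length of a closed geodesic (estimates \eqref{eq:tracea2} and \eqref{eq:trace1-chi}); this energy localization at scale $Ch$ is absent from your construction and cannot be dispensed with. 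Second, the invertibility at $z_0$: invertibility of $\mathcal P(z_0)-z_0$ does not transfer to $R(z_0)(\mathcal P(z_0)-z_0)=\mathrm{Id}+R_1(z_0)+R_2(z_0)$. The Fredholm alternative reduces the question to injectivity, and from $(\mathrm{Id}+R_1+R_2)u=0$ you only learn that $(\mathcal P-z_0)u\in\ker R(z_0)$, which you have not shown to be trivial; a soft Fredholm argument also gives no bound on the inverse uniform in $h$, which is what the proposition demands (and the pointwise bound $\mathrm{Im}\langle(\mathcal P(z_0)-z_0)u,u\rangle\geq h\|u\|^2$ is itself delicate, its sign depending on the size of $a^-$ relative to the chosen $\widetilde z_0$). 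The paper's mechanism is quantitative instead: $A_2$ is built with the propagator frozen at $z_0$, where $\|e^{iT(\mathcal P-z_0)/h}\|<1$, so at $z=z_0$ the bad-set term in $R_2$ cancels up to $\mathcal O(1/C)+\mathcal O(e^{-T\varepsilon/2})+\mathcal O_T(h^{1/2})$; hence $R_2(z_0)$ is small in operator norm, $R(z_0)(\mathcal P-z_0)$ is $\mathrm{Id}$ plus a small operator, and the Neumann series gives the uniform bound.
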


Using this proposition we want to bound the number of eigenvalues of $\mathcal P$ in $\Omega_h$. First of all notice that if $\mathcal P -z$ has a non zero kernel then so does $R(z)(\mathcal P -z)$, thus we only need to bound the dimension of the kernel of $R(z)(\mathcal P -z)$. Now since $\|R_1(z)\|_{L^2}\leq 1/2$ the operator $ \mathrm{Id} + R_1(z)$ is invertible and there exists an invertible operator $Q(z)$ such that $Q(z)R(z)(\mathcal P -z)= \mathrm{Id}+K(z)$ where $K(z)= Q(z)R_2(z)$. Since $Q$ is invertible we have $\mathrm{dim} \ker (R(z)(\mathcal{P}-z))=\dim \ker (\mathrm{Id}+K(z))$. 

\paragraph*{} The operator $R_2$ is of trace class and thus $K$ is also of trace class with 
\[
\|K\|_\mathrm{tr}\leq \|Q\|_{L^2}\|R_2\|_{\mathrm{tr}}\leq 2 \|R_2\|_{\mathrm{tr}}= o(h^{1-d})\pp
\]
It follows that $z$ is an eigenvalue of $\mathcal P$ only if $D(z)\overset{\mathrm{def}}{=} \det (1+K(z))$ is equal to $0$. Moreover the  multiplicity of an eigenvalue $z$ of $\mathcal P$ is less than the multiplicity of the zero of $D$. Using a general estimate on Fredholm determinants we get 
\[
|D(z)|\leq \exp(\|K(z)\|_{\mathrm{tr}})
\]
On the other hand for $z_0$ we have 
\[
|D(z_0)|^{-1}= \det((1+K(z_0))^{-1}) \text{ and } (1+K(z_0))^{-1}=1-K(z_0)(1+K(z_0))^{-1}\pp
\]
Using this and the estimate on Fredholm determinant we get
\[ |D(z_0)|^{-1}=|\det(1-K(z_0)(1+K(z_0))^{-1})|\leq \exp\left(\|K(z_0)(1+K(z_0))^{-1}\|_{\mathrm{tr}}\right)\]
\[\leq \exp\left(\|(1+K(z_0))^{-1}\| \|K(z_0)\|_{tr}\right) \]
which, according to Proposition \ref{prop:ResolvanteApprochee}, is smaller than $\exp(C \|K(z_0)\|_{\mathrm{tr}})$ for some constant $C>0$ independent of $h$. So far we have proved that
\begin{equation}\label{eq:bornedetfredholm}
\log|D(z_0)|\geq -C \|K(z_0)\|_{\mathrm{tr}}\; \text{ and }\;\forall z \in \Omega_h, \; \log|D(z)|\leq \|K(z)\|_{\mathrm{tr}}\pp
\end{equation}
As we will see in section \ref{section:preuvepropresolvanteapprochee} the operators $R(z)$ and $R_2(z)$ both depend holomorphically in $z$ on $\Omega_h$. Since $\mathcal P -z$ is holomorphic we get that $\mathrm{Id}+R_1(z)$ and $Q(z)$ are also holomorphic and finally that $K(z)$ is holomorphic in $z$ on $\Omega_h$. Consequently the function $z\mapsto D(z)$ is holomorphic on $\Omega_h$, we want to apply Jensen's inequality in order to bound the number of zeros of $D$ on a subset of $\Omega_h$ containing $z_0$.  

\paragraph*{}
\begin{figure}[ht] 
	\centering
		\includegraphics[width=1\textwidth]{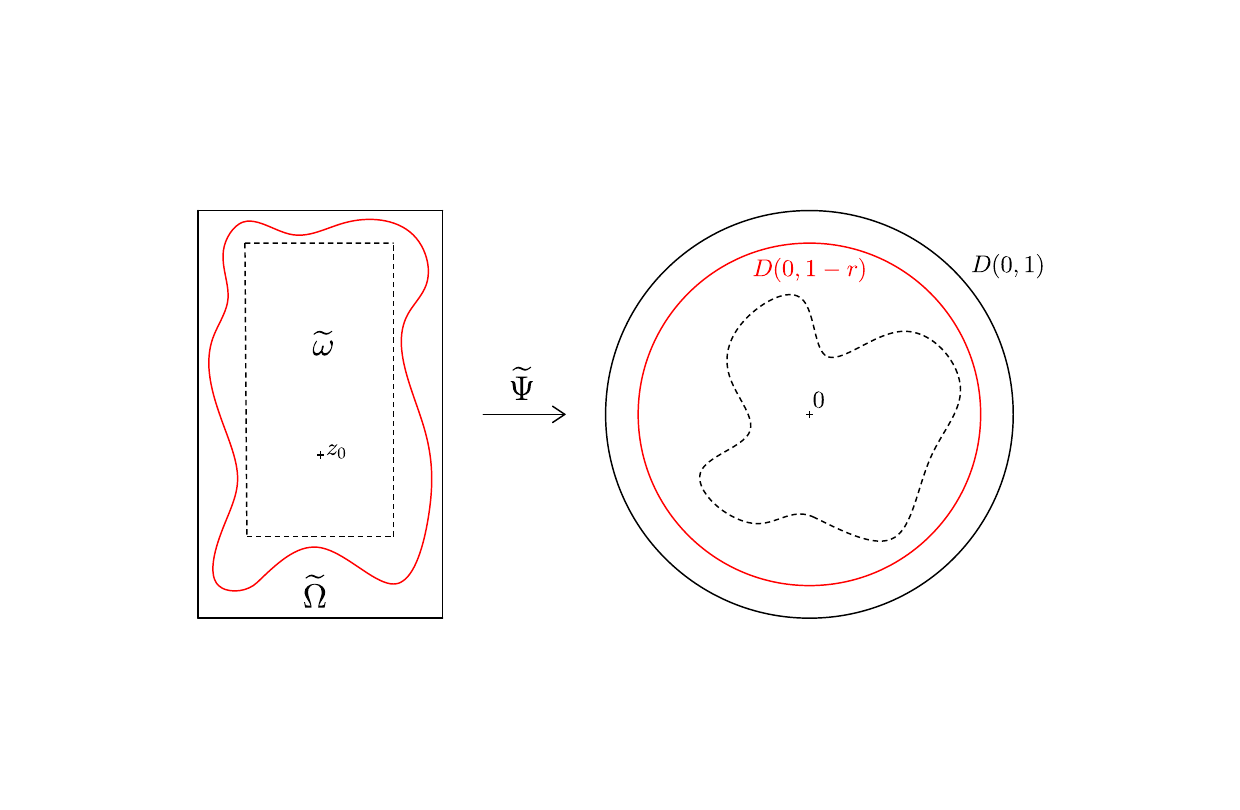}
	\caption{If $r$ is sufficiently close to $0$ then $\widetilde\Psi(\widetilde \omega)\subset D(0,1-r)$. }
	\label{fig:fonctionpsi}
\end{figure} 

 Since $\widetilde \Omega$ is a simply connected open set there exists a Riemannian mapping $\widetilde\Psi : \widetilde\Omega \to D(0;1) $ between $\widetilde\Omega$ and the open unit disk which also satisfy $\widetilde\Psi(\widetilde z_0)=0$. If we put $\Psi_h : z \mapsto \widetilde\Psi(\frac{z/2-1}{h})$ then $\Psi_h : \Omega_h\to D(0,1)$ is a Riemannian mapping which maps $z_0$ to $0$. For every $0<t<1$ let us call $n(t)$ the number of zeros (with multiplicity) of $D\circ \Psi_h^{-1}$ in $D(0,t)$; Jensen's formula states that 
\[
\frac{1}{2\pi}\int_0^{2\pi} \log|D \circ \Psi_h^{-1} (te^{i\theta}) | d\theta-\log|D\circ\Psi_h(0)|=\int_0^t \frac{n(s)}{s}d s\pp
\]
For $r$ close enough to $0$ we have $\Psi_h(\omega_h)\subset D(0;1-r)$, we then use \eqref{eq:bornedetfredholm} to obtain 
\[
\frac{1}{2\pi}\int_0^{2\pi} \log|D \circ \Psi_h^{-1} ((1-r/2)e^{i\theta}) | d\theta \leq  \sup_{z\in \Omega_h} \|K(z)\|_{\mathrm{tr}} =o(h^{1-d})\]
\[\text{ and } \log|D\circ \psi_h^{-1}(0)|\geq -C \|K(z_0)\|_{\mathrm{tr}}\pp
\]
Combining this estimates with Jensen's formula for $D\circ \Psi_h^{-1}$ gives us
\[
\int_0^{1-r/2}\frac{n(s)}{s}ds \leq (1+C) \sup_{z\in \Omega_h} \|K(z)\|_{\mathrm{tr}} = o(h^{1-d})
\]
and since the map $s\mapsto n(s)$ is increasing we have \[n(1-r)\leq \frac{2}{r}\int_0^{1-r/2}\frac{n(s)}{s}ds \leq \frac{2}{r}(1+C) \sup_{z\in \Omega_h} \|K(z)\|_{\mathrm{tr}} \]
The number of zeros of $D$ in $\omega_h$ is equal to the number of zeros of $D\circ \Psi_h^{-1}$ in $\Psi_h(\omega_h)$ which is obviously less than $n(1-r)$.

\paragraph*{} Notice that $r$ does not depend on $h$ because $\Omega_h$, $\omega_h$, $z_0$ and $\Psi_h$ are just rescaled versions of $\widetilde \Omega$, $\widetilde \omega$, $\widetilde z_0$ and $\widetilde \Psi$. We therefore obtain the desired bound : the number of zeros of $D$ in $\omega_h$ is a $o(h^{1-d})$ and the proof of Theorem \ref{th:3} is complete. 
\begin{flushright}
\qedsymbol
\end{flushright}

\section{Proof of Proposition \ref{prop:ResolvanteApprochee}}\label{section:preuvepropresolvanteapprochee}

In order to ease the notations we will use the Landau notation (or ``big O'' notation) directly for operators throughout all this section. It must always be interpreted as a Landau notation for the $L^2$ norm of the operator when the semi-classical parameter $h$ goes to zero. For example if we write $\mathcal P-z = P-1 +\mathcal O (h)$ we mean that 
\[
\left\|\mathcal P -z -P +1\right\|_{L^2}=\mathcal{O}(h)\pp
\] 

\subsection{Idea of the proof}

\paragraph*{}Notice that $\mathcal P -z =P-1+\mathcal{O}(h)$ and that the principal symbol of $P-1$ is $|\xi|_g^2-1$. Using functional calculus (see \eqref{eq:f(p-1/h)}) it is easy to find some pseudo-differential operator $A_3$ such that the principal symbol of $A_3(\mathcal P-z)$ is $1$ where $||\xi|^2_g-1|\geq Ch$ for some fixed but large enough $C$. On the set where $||\xi|^2_g-1|<Ch$ we use a different approach. We start with the formula
\[
\frac{-i}{h}\int_0^Te^{it(\mathcal{P}-z)/h}dt (\mathcal{P}-z)=1-e^{iT(\mathcal{P}-z)/h}
\]
and we would like to have $\|e^{iT(\mathcal{P}-z)/h}\|_{L^2}<1$ for some time $T$ so $1-e^{iT(\mathcal{P}-z)/h}$ would be invertible. As we will show in Lemma \ref{lemmeetatcoherents} the operator $e^{iT(\mathcal{P}-z)/h}$ behaves like 
\[e^{-iTz/h}e^{iTh\Delta}\mathrm{Op}^{\mathrm{AW}}_h(G_{2T}(x,\xi/2))\]
for functions that are concentrated around the energy layer $p^{-1}(1)$. Here $\mathrm{Op}^{\mathrm{AW}}_h$ denotes the anti-Wick quantization\footnote{See Annex \ref{an:antiwick} for a definition.} and  $\phi_t$ is the Hamiltonian flow associated with $p$. We thus see that we cannot hope to have $\|e^{iT(\mathcal{P}-z)/h}\|_{L^2}<1$ for every $z\in \Omega_h$. More precisely we cannot hope to have $\|e^{iT(\mathcal{P}-z)/h}\|_{L^2}<1$ when $\mathfrak{Im}(z/h)\geq 2C_\infty^-$. However, for almost every $(x,\xi)$ in $p^{-1}(1)$ we have $ \lim_{t\to\infty} \frac{1}{t}\log \|G_{2t}(x,\xi/2)\|\leq 2\Lambda^+ $ and so for $T$ large enough we have 
\begin{equation}\label{eq:jesaispasquoimettre}
\frac{1}{T}\log\|G_{2T}(x,\xi/2)\|\leq 2\Lambda^++\varepsilon/2
\end{equation}
for every $(x,\xi)$ of a compact set $K\subset p^{-1}(1)$ with large measure. If $\chi$ is a $C^\infty$ function with compact support in $\xi$ such that $\chi(x,\xi)=0$ for every $(x,\xi)$ that does not satisfy \eqref{eq:jesaispasquoimettre} then
\[
\| e^{iT(\mathcal{P}-z)/h}\mathrm{Op}^{\mathrm{AW}}_h(\chi)\|_{L^2}<1\pp
\]
In order to localize this to the energy layer $||\xi|_g-1|< C h$ we will use operators of the form $f\left(\frac{P-1}{h}\right)$ where $f\in \mathcal S(\mathbf R)$ is properly chosen. For instance we will show that the operator \[f\left(\frac{P-1}{h}\right)\mathrm{Op}^{\mathrm{AW}}_h(1-\chi)f\left(\frac{P-1}{h}\right)\] has a small trace norm compared to $h^{1-d}$. Then, by choosing carefully $f$, $\chi$ and $T$ depending on $h$ we can define an operator $R(z)$ which has the properties presented in Proposition \ref{prop:ResolvanteApprochee}.

\subsection{Construction of the operator $R(z)$}\label{section:constructionR(z)}
\paragraph*{} According to Oseledec's theorem for almost every $(x,\xi)\in T^*M$ the Lyapunov exponents of $(G_t(x,\xi))_{t\geq0}$ are well defined. Recall that 
\[
\Lambda^- = \underset{(x,\xi)\in T^*M}{\mathrm{ess} \inf} \lambda_1(x,\xi) \text{ and } \Lambda^+=\underset{(x,\xi)\in T^*M}{\mathrm{ess} \sup} \lambda_n(x,\xi)\vv
\]
where $\lambda_1(x,\xi)\leq \ldots \leq \lambda_n(x,\xi)$ are (if they exist) the Lyapunov exponents of $(G_t(x,\xi))_{t\geq 0}$. So for almost every $(x,\xi)$ in $\{ (x,\xi)\in T^*M : |\xi|_g =1] \}$ we have 
\[
\lim_{t\to +\infty} \frac 1 t \log (\|G_{2t}(x,\xi/2)\|)=2\lambda_n(x,\xi)\leq 2\Lambda^+\pp
\]
According to Egorov's theorem for every $\varepsilon, \eta>0$ there exist a compact set $ K\subset p^{-1}(1)$ and a time $T_0$ such that for every $T\geq T_0$ and every $(x,\xi)$ in $ K$ we have 
\[
\frac 1 T \log(\|G_{2T}(x,\xi/2)\|) < 2\Lambda^+ + \varepsilon/2
\]
and the Liouville's measure of $p^{-1}(1)\backslash  K$ is smaller than $\eta$. Now remark that the geodesic flow is continuous and so, for a fixed $T$, the matrix $G_T$ depends continuously on $(x,\xi)$. This means that if a point $(x,\xi)$ is close enough to $K$ then we still have
\[
\frac 1 T \log(\|G_{2T}(x,\xi/2)\|)< 2\Lambda^++\varepsilon/2 \pp
\]
Consequently there exists some $\mathscr C^\infty$ smooth function $\widetilde\chi_K^{(T)} : T^*M \to \mathbf R_+$ such that $\widetilde\chi_K^{(T)}$ equals $1$ on $K$, $\mathrm{supp} (\widetilde\chi_K^{(T)}) \subset \{ (x,\xi)\in T^*M : |\xi|_g \in [1/2;3/2]\}$ and 
\[
(x,\xi)\in \mathrm{supp} (\widetilde\chi_K^{(T)}) \implies \frac 1 T \log(\|G_{2T}(x,\xi/2)\|)< 2\Lambda^+ + \varepsilon/2\pp
\]
We then define the function $\chi_K^{(T)}= \widetilde \chi_K^{(T)} \circ j \circ \phi_T $ where $j(x,\xi)=(x,-\xi)$ and $\phi$ is the hamiltonian flow generated by $p$. Notice that the functions $j$ and $\phi_T$ both preserve the Liouville measure on $p^{-1}(1)$.

\paragraph*{} We now choose some non negative function $f\in \mathcal S(\mathbf R)$ such that $f(0)=1$ and $\mathrm{supp}(\hat f)$ is compact. Let $C$ be some positive constant which will be fixed later and let us define the three operators $A_1$, $A_2$ and $A_3$ as

\[
A_1(z)= -f\left(\frac{P-1}{Ch}\right)\mathrm{Op}^{\mathrm{AW}}_h(\chi_K^{(T)}) f\left(\frac{P-1}{Ch}\right)\frac i h \int_0^T e^{it(\mathcal P-z)/h} d t\vv
\]
\[
A_2=-f\left(\frac{P-1}{Ch}\right) \mathrm{Op}^{\mathrm{AW}}_h(1-\chi_K^{(T)})  f\left(\frac{P-1}{Ch}\right) \frac i h  \int_0^T e^{it(\mathcal P- z_0)/h} d t \vv
\]
\[
\text{and }\; A_3=\left(1-f\left(\frac{P-1}{Ch}\right)\right) w(P-1) \left(1-f\left(\frac{P-1}{Ch}\right)\right)\pp
\]
Here $\mathrm{Op}^{\mathrm{AW}}_h(b)$ is the $h$-anti-Wick quantization of a symbol $b$ and $w$ is defined by
\[w: x\mapsto \frac{1}{x} \left(1-\chi\left(\frac{x}{Ch}\right)\right)\]
where $ \chi$ is some smooth cut-off function with $\chi(x)=1$ around zero and some small support that will be chosen later. For a definition of the anti-Wick quantization see Annex \ref{an:antiwick}.  We finally define $R(z)$ as the sum of $A_1$, $A_2$ and $A_3$ and we are now ready to state a more precise version of Proposition \ref{prop:ResolvanteApprochee}.

\begin{proposition}\label{prop:ResolvanteApprocheeBIS}
We have the following equalities.
\begin{equation}\label{eq:a1p-z}
A_1(z)(\mathcal P-z)= f\left(\frac{P-1}{Ch}\right)\mathrm{Op}^{\mathrm{AW}}_h(\chi_K^{(T)}) f\left(\frac{P-1}{Ch}\right) +\mathcal{O}\left(\frac{1}{C}\right) +\mathcal{O}\left(e^{-T\varepsilon/2}\right) + \mathcal{O}_T(h^{1/2})
\end{equation}
\begin{equation}\label{eq:a2p-z}
A_2(\mathcal P-z_0)= f\left(\frac{P-1}{Ch}\right)\mathrm{Op}^{\mathrm{AW}}_h(1-\chi_K^{(T)}) f\left(\frac{P-1}{Ch}\right)+\mathcal{O}\left(\frac{1}{C}\right)+\mathcal{O}\left(e^{-T\varepsilon/2}\right) + \mathcal{O}_T(h^{1/2})
\end{equation}
\begin{equation}\label{eq:a3p-z}
\left\|A_3(\mathcal P-z) - \left(1-f\left(\frac{P-1}{Ch}\right)\right)^2\right\|_{L^2}\leq \mathcal{O}\left(\frac{1}{C}\right)+ \delta
\end{equation}
Where $\delta>0 $ depends only on $\chi$ and $f$ and can be made arbitrarily small. We moreover have the following bounds
\begin{equation}\label{eq:tracea2}
\left\|A_2(\mathcal P-z)\right\|_{\mathrm{Tr}}\leq \left( \int_{p^{-1}(1)}1-\chi_K^{(T)} dL_0\right) \cdot \mathcal{O}_{C}(h^{d-1})
\end{equation}
\begin{equation}\label{eq:trace1-chi}
\left\|f\left(\frac{P-1}{Ch}\right)\mathrm{Op}^{\mathrm{AW}}_h(1-\chi_K^{(T)}) f\left(\frac{P-1}{Ch}\right)\right\|_{\mathrm{Tr}}\leq \left( \int_{p^{-1}(1)}1-\chi_K^{(T)} dL_0\right) \cdot \mathcal{O}_{C}(h^{d-1})
\end{equation}
where $L_0$ is the Liouville measure on $p^{-1}(1)$.

\end{proposition}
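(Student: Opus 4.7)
The plan is to handle the five statements in order, with the first being substantive and the others being consequences of it combined with standard semi-classical estimates. For (\ref{eq:a1p-z}) the starting point is the Duhamel identity
\[
\tfrac{-i}{h}\int_0^T e^{it(\mathcal P-z)/h}\,dt\,(\mathcal P-z)=1-e^{iT(\mathcal P-z)/h},
\]
valid because $e^{it(\mathcal P-z)/h}$ commutes with $\mathcal P-z$. Substituting into the definition of $A_1(z)$ gives exactly the expected leading term $f(\tfrac{P-1}{Ch})\mathrm{Op}^{\mathrm{AW}}_h(\chi_K^{(T)})f(\tfrac{P-1}{Ch})$, minus the same expression multiplied on the right by $e^{iT(\mathcal P-z)/h}$, and the whole point is to estimate the latter. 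Here one invokes Lemma \ref{lemmeetatcoherents} to replace $\mathrm{Op}^{\mathrm{AW}}_h(\chi_K^{(T)})e^{iT\mathcal P/h}$, up to an $\mathcal O_T(h^{1/2})$ semi-classical error, by a unitary factor composed with $\mathrm{Op}^{\mathrm{AW}}_h\bigl((\chi_K^{(T)}\circ j\circ \phi_T)(x,\xi)\,G_{2T}(x,\xi/2)\bigr)$. By construction $\chi_K^{(T)}=\widetilde\chi_K^{(T)}\circ j\circ\phi_T$, and on $\mathrm{supp}(\widetilde\chi_K^{(T)})$ the cocycle obeys $\|G_{2T}(x,\xi/2)\|\le e^{T(2\Lambda^++\varepsilon/2)}$; combined with $|e^{-iTz/h}|\le e^{-T(2\Lambda^++\varepsilon)}$ forced by $z\in\Omega_h$, this yields the $\mathcal O(e^{-T\varepsilon/2})$ term. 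The residual $\mathcal O(1/C)$ arises from commuting $f(\tfrac{P-1}{Ch})$ past $\mathcal P-z$: the mismatch $\mathcal P-z-(P-1)$ is of order $h$ while the spectral cutoff has derivatives of size $(Ch)^{-1}$, producing a factor $1/C$.

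The estimate (\ref{eq:a2p-z}) is obtained through the same Duhamel manipulation, now with the fixed point $z_0$. The key simplification is that $\widetilde z_0=i(2a^--1)$ lies so deep in the lower half-plane that $|e^{-iTz_0/h}|=e^{2T(2a^--1)}$, while the unconditional bound $\|G_{2T}(x,\xi/2)\|\le e^{-2Ta^-}$ yields a product of order $e^{-2T}$, much stronger than the required $\mathcal O(e^{-T\varepsilon/2})$. Notably, the support property of $1-\chi_K^{(T)}$ is not used; only the global cocycle bound is invoked. The $\mathcal O(1/C)$ and $\mathcal O_T(h^{1/2})$ errors appear exactly as in the treatment of $A_1$.

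For (\ref{eq:a3p-z}) the key algebraic identity is $w(x)\,x=1-\chi(x/(Ch))$, which gives
\[
A_3\,(P-1)=\bigl(1-f(\tfrac{P-1}{Ch})\bigr)\bigl(1-\chi(\tfrac{P-1}{Ch})\bigr)\bigl(1-f(\tfrac{P-1}{Ch})\bigr).
\]
Since $f(0)=1$, if one first chooses the support of $\chi$ so small that $|1-f|\le\delta^{1/2}$ on it, the spectral theorem applied to $P$ bounds $\|\chi(\tfrac{P-1}{Ch})(1-f(\tfrac{P-1}{Ch}))\|_{L^2}$ by $\delta^{1/2}$, which gives the $\delta$ discrepancy with $(1-f(\tfrac{P-1}{Ch}))^2$. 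Passing from $P-1$ to $\mathcal P-z$ adds the error $\mathcal P-z-(P-1)$ of operator norm $\mathcal O(h)$; after multiplication by $w(P-1)$, whose symbol is of size $\mathcal O((Ch)^{-1})$ on the spectral support of $1-\chi(\tfrac{\cdot}{Ch})$, this contributes the $\mathcal O(1/C)$ error.

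Both trace-norm estimates reduce to the Weyl-type bound
\[
\bigl\|f(\tfrac{P-1}{Ch})\,\mathrm{Op}^{\mathrm{AW}}_h(b)\,f(\tfrac{P-1}{Ch})\bigr\|_{\mathrm{tr}}\le \mathcal O_C(h^{1-d})\int_{p^{-1}(1)}|b|\,dL_0,
\]
which follows from the explicit trace formula for anti-Wick operators together with the fact that $f(\tfrac{P-1}{Ch})^2$ localises $P$ in a window of width $\mathcal O_C(h)$ around $1$, a coarea computation on $p^{-1}(1)$. Applied with $b=1-\chi_K^{(T)}$ this is exactly (\ref{eq:trace1-chi}); for (\ref{eq:tracea2}) one writes $A_2(\mathcal P-z)=A_2(\mathcal P-z_0)+(z_0-z)A_2$, handles the first summand through (\ref{eq:a2p-z}) and (\ref{eq:trace1-chi}), and notes that $|z_0-z|=\mathcal O(h)$ absorbs the $1/h$ prefactor in $A_2$. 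The central difficulty, and the real content of the proposition, is the propagation statement of Lemma \ref{lemmeetatcoherents}: one has to propagate an anti-Wick symbol through the non-self-adjoint, matrix-valued evolution $e^{iT\mathcal P/h}$ while the amplitude is governed by the non-commutative cocycle $G_t$, and to control the semi-classical error uniformly for $T$ ultimately allowed to grow slowly with $h$.
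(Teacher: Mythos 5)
Your overall strategy matches the paper's: Duhamel identity to reduce to a bound on $e^{iT(\mathcal P-z)/h}$, the coherent-state propagation lemma to exhibit the cocycle $G_{2T}$, the algebraic identity $w(x)\,x=1-\chi(x/(Ch))$ for \eqref{eq:a3p-z}, and the semiclassical trace formula for the two trace bounds. Two of your steps, however, are not closed as written.

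First, your account of the $\mathcal O(1/C)$ error in \eqref{eq:a1p-z} is off-target. After the Duhamel step you are left with $f\left(\frac{P-1}{Ch}\right)\mathrm{Op}^{\mathrm{AW}}_h(\chi_K^{(T)})f\left(\frac{P-1}{Ch}\right)e^{iT(\mathcal P-z)/h}$; there is no remaining occasion to commute $f$ past $\mathcal P-z$, and the mechanism you describe (mismatch $\mathcal P-z-(P-1)=\mathcal O(h)$ times cutoff gradient $(Ch)^{-1}$) would in any case give $\mathcal O(h/C)$, not $\mathcal O(1/C)$. What you actually need before Lemma \ref{lemmeetatcoherents} can act on $\mathrm{Op}^{\mathrm{AW}}_h(\chi_K^{(T)})e^{iT\mathcal P/h}$ is to move the \emph{second} factor $f\left(\frac{P-1}{Ch}\right)$ across $\mathrm{Op}^{\mathrm{AW}}_h(\chi_K^{(T)})$. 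This requires a separate commutator estimate $\left\|\left[f\left(\frac{P-1}{Ch}\right),\mathrm{Op}_h(u)\right]\right\|_{L^2}=\mathcal O(1/C)$, proved by writing $f\left(\frac{P-1}{Ch}\right)=\frac{1}{\sqrt{2\pi}}\int\hat f(t)e^{it(P-1)/(Ch)}\,dt$ and invoking Egorov's theorem: the symbol of $e^{itP/(Ch)}\mathrm{Op}_h(u)e^{-itP/(Ch)}$ is $u\circ\phi_{t/C}$ and $\|u-u\circ\phi_{t/C}\|_\infty=\mathcal O(1/C)$ uniformly for $t\in\supp\hat f$. This Egorov/Lipschitz mechanism, not the $\mathcal O(h)$ mismatch, is the genuine source of the $1/C$, and without it the proof of \eqref{eq:a1p-z} (and the analogous step in \eqref{eq:a2p-z}) is incomplete.

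Second, the decomposition $A_2(\mathcal P-z)=A_2(\mathcal P-z_0)+(z_0-z)A_2$ cannot deliver \eqref{eq:tracea2} by feeding the first summand through \eqref{eq:a2p-z}: that identity is an \emph{operator-norm} statement whose error terms are not a priori of trace class, so they cannot be discarded when estimating $\|\cdot\|_{\mathrm{Tr}}$. The route that works is to keep $A_2(\mathcal P-z)$ factored as $-f\left(\frac{P-1}{Ch}\right)\mathrm{Op}^{\mathrm{AW}}_h(1-\chi_K^{(T)})f\left(\frac{P-1}{Ch}\right)\cdot\frac{i}{h}\int_0^T e^{it(\mathcal P-z_0)/h}\,dt\,(\mathcal P-z)$, apply $\|AB\|_{\mathrm{Tr}}\leq\|A\|_{\mathrm{Tr}}\|B\|_{L^2}$, and observe that $-\frac{i}{h}\int_0^T e^{it(\mathcal P-z_0)/h}\,dt\,(\mathcal P-z)=\left(\mathrm{Id}-e^{iT(\mathcal P-z_0)/h}\right)(\mathcal P-z_0)^{-1}(\mathcal P-z)$ is bounded on $L^2$ uniformly in $h$ because $z_0$ lies well below the line $\mathfrak{Im}(z/h)=4a^-$, making $(\mathcal P-z_0)^{-1}$ uniformly bounded. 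This avoids the spurious $T/h$ prefactor that your splitting would introduce via $\|A_2\|_{\mathrm{Tr}}$.
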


\subsection{Proof of Proposition \ref{prop:ResolvanteApprocheeBIS}}

\paragraph*{}We start by proving a lemma on commutators with $f\left(\frac{P-1}{Ch}\right)$. 

\begin{lemme*}\label{lemmecommutateur}
Let $U$ be a pseudo-differential operator on $M$ with symbol $u$ compactly supported in the $\xi$ variable. We have the following bound.
\[
\left\|\left[f\left(\frac{P-1}{Ch}\right), U \right]\right\|_{L^2}=\mathcal{O}\left(\frac{1}{C}\right)\pp
\]
\end{lemme*}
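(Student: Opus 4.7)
The plan is to exploit the Fourier representation of $f$ (whose Fourier transform is compactly supported) together with the standard semiclassical commutator estimate $[P,U] = \mathcal{O}(h)$ in $L^2$. Since $u$ is compactly supported in $\xi$, $U$ is a genuine $h$-pseudo-differential operator with compactly supported symbol, and the symbol of $[P,U]$ is obtained via the Moyal product; its principal symbol is $\frac{h}{i}\{|\xi|_g^2,u\}$, still compactly supported in $\xi$, so $\|[P,U]\|_{L^2} = \mathcal{O}(h)$ by the Calderón–Vaillancourt theorem.

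Using the inverse Fourier transform, I would write
\[
f\!\left(\frac{P-1}{Ch}\right) = \frac{1}{2\pi}\int_{\mathbf R} \hat f(t)\, e^{it(P-1)/(Ch)}\, dt,
\]
which is a Bochner integral of unitary operators (valid because $\hat f \in L^1$ with compact support and $P-1$ is self-adjoint). Plugging into the commutator, matters reduce to estimating $\|[e^{it(P-1)/(Ch)}, U]\|_{L^2}$ for $t$ in the compact set $\supp\hat f$. Setting $V(t) = e^{-it(P-1)/(Ch)}\, U\, e^{it(P-1)/(Ch)}$, a direct differentiation yields
\[
V'(t) = \frac{-i}{Ch}\, e^{-it(P-1)/(Ch)}\,[P,U]\, e^{it(P-1)/(Ch)},
\]
so by unitarity of the conjugating exponentials and the bound $\|[P,U]\|_{L^2} = \mathcal{O}(h)$ we obtain $\|V'(t)\|_{L^2} = \mathcal{O}(1/C)$ uniformly in $t$. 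Integrating from $0$ to $t$ gives $\|V(t) - U\|_{L^2} \leq |t|\,\mathcal{O}(1/C)$, and since $[e^{it(P-1)/(Ch)}, U] = e^{it(P-1)/(Ch)}(U - V(t))$, this yields
\[
\bigl\|[e^{it(P-1)/(Ch)}, U]\bigr\|_{L^2} \leq |t|\,\mathcal{O}(1/C).
\]

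Combining with the Fourier formula,
\[
\left\|\left[f\!\left(\tfrac{P-1}{Ch}\right), U\right]\right\|_{L^2} \leq \frac{1}{2\pi}\int_{\supp\hat f} |\hat f(t)|\,|t|\,dt \cdot \mathcal{O}\!\left(\tfrac{1}{C}\right) = \mathcal{O}\!\left(\tfrac{1}{C}\right),
\]
which is the claimed bound. The step that requires the most care is the commutator estimate $\|[P,U]\|_{L^2} = \mathcal{O}(h)$: although $P=-h^2\Delta$ itself is not a compactly supported $\Psi$DO, the symbol of $[P,U]$ inherits the compact $\xi$-support of $u$, so the full semiclassical symbolic calculus applies and the $\mathcal{O}(h)$ loss from the Moyal bracket is exactly compensated by the $1/(Ch)$ prefactor coming from the scale in the argument of $f$, producing the clean $1/C$ gain.
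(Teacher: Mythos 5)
Your proof is correct, and it reaches the bound by a route that differs from the paper's in the key step. Both arguments start identically, writing $f\bigl(\frac{P-1}{Ch}\bigr)$ via the Fourier inversion formula and reducing to a uniform bound on $\bigl\|\bigl[e^{it(P-1)/(Ch)},U\bigr]\bigr\|_{L^2}$ for $t\in\supp\hat f$. The paper then invokes Egorov's theorem: $e^{itP/(Ch)}Ue^{-itP/(Ch)}=\mathrm{Op}_h(u\circ\phi_{t/C})+\mathcal O(h)$, observes that $u-u\circ\phi_{t/C}$ is $\mathcal O(1/C)$ in all seminorms because the flow is only run for time $t/C$, and concludes by Calder\'on--Vaillancourt. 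You instead use only the first-order commutator estimate $\|[P,U]\|_{L^2}=\mathcal O(h)$ (justified exactly as you say: the symbol of $[P,U]$ is $\frac{h}{i}\{p,u\}+\mathcal O(h^2)$ with compact $\xi$-support, so Calder\'on--Vaillancourt applies) and a Duhamel/fundamental-theorem-of-calculus argument on $V(t)=e^{-it(P-1)/(Ch)}Ue^{it(P-1)/(Ch)}$, giving $\|V(t)-U\|_{L^2}\leq |t|\,\mathcal O(1/C)$ and hence the claim after integrating against $|\hat f(t)|$. Your version is more elementary and self-contained — it is essentially the proof of Egorov's theorem specialized to the short time $t/C$, so you never need the transported symbol or uniformity of the Egorov remainder in the parameter; it also shows that compact support of $\hat f$ is not really needed, only $\int|t\hat f(t)|\,dt<\infty$. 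The paper's version is shorter on the page because it cites Egorov as a black box, at the cost of having to track that the $\mathcal O(h)$ remainder and the symbol seminorm estimates are uniform for $t\in\supp\hat f$. The differing Fourier normalization constant ($\frac{1}{2\pi}$ versus the paper's $\frac{1}{\sqrt{2\pi}}$) is immaterial.
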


\begin{proof}[Proof.]
We first recall the formula
\begin{equation}\label{eq:f(p-1/h)}
f\left(\frac{P-1}{Ch}\right)=\frac{1}{\sqrt{2\pi}}\int_{\mathbf R} \hat f(t) e^{it\frac{P-1}{Ch}}dt \pp
\end{equation}
According to this formula
\[
\left[f\left(\frac{P-1}{Ch}\right), U \right]=  \int_{\mathbf R} \hat f (t) \left[ e^{it\frac{P-1}{Ch}} , U \right] dt = \int_{\mathbf R} \hat f (t) e^{\frac{-it}{Ch}}\left[ e^{it\frac{P}{Ch}} , U \right] dt\pp
\]
Since $e^{it\frac{P}{Ch}}$ is an isometry we have 
\[
\left\|\left[ e^{it\frac{P}{Ch}} , U \right] \right\|_{L^2}= \left\|e^{it\frac{P}{Ch}}U e^{-it\frac{P}{Ch}}-U\right\|_{L^2}
\]
and by Egorov's theorem we know that $e^{it\frac{P}{Ch}}Ue^{-it\frac{P}{Ch}} = \mathrm{Op}_h(u\circ \phi_{t/C})+\mathcal O(h)$ where $\phi$ is the Hamiltonian flow on $T^*M$ associated with $p$. Notice that if we write $(y,\eta)=\phi_{t/C}(x,\xi)$ then $|\xi|_x^2=|\eta|_y^2$ so $u $ and $u \circ \phi_{t/C}$ are both compactly supported in $\xi$. Consequently we have 
\[
\left\|u-u \circ \phi_{t/C}  \right\|_\infty = \mathcal O\left(\frac{1}{C}\right)
\]
uniformly in $t\in \mathrm{supp}(\hat f)$ and the same estimates goes with all the derivatives of $u - u \circ \phi_{t/C}$. According to Calderon-Vaillancourt's theorem we then have 
\[
\left\|\left[ e^{it\frac{P}{Ch}} , U\right] \right\|_{L^2}=\mathcal O \left(\frac{1}{C}\right)
\]
uniformly in $t\in\mathrm{supp }(\hat f)$ and we finally get
\[
\left\|\left[f\left(\frac{P-1}{Ch}\right), U \right]\right\|_{L^2} = \mathcal O \left(\frac{1}{C}\right)\pp
\]
\end{proof}

We also need a lemma to approximate $e^{it(\mathcal{P}-z)/h}$.

\begin{lemme*}\label{lemmeetatcoherents}
Let $t\in [0;T]$ be a fixed positive real number and $u$ be a  symbol compactly supported, we have
\[
\left\|\mathrm{Op}^{\mathrm{AW}}_h(u)\left[e^{it\mathcal P/h}-\mathrm{Op}^{\mathrm{AW}}_h(G_{2t}\circ j \circ \phi_t(x,\xi/2)) e^{-ith\Delta} \right] \right\|_{L^2(M)}=\mathcal O_T(h^{1/2})
\]
where $j(x,\xi)=(x,-\xi)$ for every $(x,\xi)\in T^*M$ and $\phi_t$ is the Hamiltonian flow generated by $p$.

\end{lemme*}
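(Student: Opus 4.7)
The plan is to introduce $W(t):=\mathrm{Op}^{\mathrm{AW}}_h(\tilde b(t))\,U(t)$, with $\tilde b(t,x,\xi):=G_{2t}\circ j\circ\phi_t(x,\xi/2)$ and $U(t):=e^{itP/h}=e^{-ith\Delta}$, and to show that $W(t)$ approximates $e^{it\mathcal P/h}$ in $L^2$-operator norm (after composing with the localizer $\mathrm{Op}^{\mathrm{AW}}_h(u)$). Since $W(0)=\mathrm{Id}$, Duhamel's formula yields
\[
e^{it\mathcal P/h}-W(t)=\int_0^t e^{i(t-s)\mathcal P/h}\bigl((i\mathcal P/h)W(s)-\partial_s W(s)\bigr)\,ds\pp
\]
The propagator $e^{i\tau\mathcal P/h}$ is uniformly bounded on $L^2$ for $\tau\in[0,T]$ (since $\mathcal P-P=ihQ(z)$ has bounded norm), so the target $\mathcal O_T(h^{1/2})$ estimate is reduced to showing the defect $(i\mathcal P/h)W(s)-\partial_s W(s)$ is $\mathcal O(h^{1/2})$ in $L^2$-operator norm, uniformly in $s\in[0,T]$.

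Writing $B(t):=\mathrm{Op}^{\mathrm{AW}}_h(\tilde b(t))$ and differentiating $W(t)=B(t)U(t)$, one obtains
\[
(i\mathcal P/h)W(t)-\partial_t W(t)=[iP/h,B(t)]\,U(t)-Q(z)B(t)\,U(t)-\mathrm{Op}^{\mathrm{AW}}_h(\partial_t\tilde b(t))\,U(t)\pp
\]
The semi-classical anti-Wick calculus gives $[iP/h,B(t)]=\mathrm{Op}^{\mathrm{AW}}_h(\{p,\tilde b(t)\})+\mathcal O(h^{1/2})$ and $Q(z)B(t)=\mathrm{Op}^{\mathrm{AW}}_h(q\,\tilde b(t))+\mathcal O(h^{1/2})$ with $q(x)=2\sqrt{z}\,a(x)$, so the defect simplifies to $\mathrm{Op}^{\mathrm{AW}}_h\bigl(\partial_t\tilde b-\{p,\tilde b\}+q\,\tilde b\bigr)U(t)+\mathcal O(h^{1/2})$. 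What remains is to verify the transport equation $\partial_t\tilde b=\{p,\tilde b\}-q\,\tilde b$ for the symbol $\tilde b$. This is a direct differentiation using the cocycle identity $\partial_sG_s=-a(x_s)G_s$, the time-reversal symmetry $\phi_s\circ j=j\circ\phi_{-s}$, and the scalings that reconcile the normalization $p^{-1}(1/2)$ used to define $G$ with the energy shell $p^{-1}(1)$ on which the quantum problem sits; the factor $\sqrt{z}=1+\mathcal O(h)$ contributes only a lower-order term absorbed in the $\mathcal O(h^{1/2})$.

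The main obstacle is precisely the bookkeeping in that transport equation: the involution $j$, the rescaling $\xi\mapsto\xi/2$, and the doubled time parameter $2t$ must conspire to match the symbol $\tilde b$ (built from the cocycle $G$) with the Hamiltonian transport plus multiplication by the damping that is dictated by the symbolic calculus. Once this algebraic identification is settled, the choice of anti-Wick (rather than Weyl) quantization is what keeps the commutator and composition losses at $\mathcal O(h^{1/2})$---a loss which would otherwise be amplified by the $1/h$ in front of $P$---so that the Duhamel integral over $[0,T]$ indeed produces the announced $\mathcal O_T(h^{1/2})$ bound, the left localization by $\mathrm{Op}^{\mathrm{AW}}_h(u)$ ensuring that all symbolic manipulations take place over a compact subset of $T^*M$.
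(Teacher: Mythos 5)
Your strategy (Duhamel for $W(t)=\mathrm{Op}^{\mathrm{AW}}_h(\tilde b(t))e^{itP/h}$ plus a transport equation for the symbol) is genuinely different from the paper's proof, which propagates coherent states via Robert's theorem and resums them with the anti-Wick resolution of the identity. But the step you dismiss as ``direct differentiation'' is exactly where the argument breaks. The symbol as written, $\tilde b(t,x,\xi)=G_{2t}\circ j\circ\phi_t(x,\xi/2)$, does \emph{not} solve $\partial_t\tilde b=\{p,\tilde b\}-q\,\tilde b$. The point is that the rescaling $\sigma:(x,\xi)\mapsto(x,\xi/2)$ and the flow commute only up to halving the time ($\phi_t\circ\sigma=\sigma\circ\phi_{t/2}$, since $p$ is quadratic in $\xi$). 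In the scalar case one computes $G_{2t}(j\phi_t(x,\xi/2))=\exp\bigl(-2\int_{-t/2}^{t/2}a(\pi\phi_s(x,\xi))\,ds\bigr)$, whereas the solution of your transport equation with $b(0)=1$ is $\exp\bigl(-2\int_{0}^{t}a(\pi\phi_s(x,\xi))\,ds\bigr)$; these differ unless $a$ is constant along the trajectory, so for the stated symbol the Duhamel defect is $\mathcal O(1)$, not $\mathcal O(h^{1/2})$. The symbol for which your scheme closes is $b(t,x,\xi)=G_{2t}(j\sigma\phi_t(x,\xi))=G_{2t}(x_t,-\xi_t/2)$, i.e.\ with the halving performed \emph{after} the flow; for that choice the cocycle identity gives exactly $\partial_t b=\{p,b\}-2a(x)\,b$ with $a(x)$ multiplying on the left, matching the principal symbol of $Q\,\mathrm{Op}^{\mathrm{AW}}_h(b)$ --- an ordering issue that in the matrix-valued case cannot be absorbed into ``bookkeeping''. (The paper's own notation is admittedly inconsistent between the statement and its proof, and its later use of the lemma only involves $\sup\|\widetilde\chi_K^{(T)}G_{2T}\|$, which is insensitive to this time shift; your proof, however, hinges on the identity itself, so you must fix the symbol or prove the stated form separately.)

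A second gap is the reduction that discards the localizer: in your Duhamel formula $\mathrm{Op}^{\mathrm{AW}}_h(u)$ sits to the left of $e^{i(t-s)\mathcal P/h}$ and is \emph{not} adjacent to the defect, so it cannot ``ensure that all symbolic manipulations take place over a compact subset''. You then need $[iP/h,\mathrm{Op}^{\mathrm{AW}}_h(\tilde b)]=\mathrm{Op}^{\mathrm{AW}}_h(\{p,\tilde b\})+\mathcal O(h^{1/2})$ and the analogous composition estimate uniformly on $T^*M$; but $\tilde b(s,\cdot)$ is not in a uniform symbol class in $\xi$, because the flow of the $2$-homogeneous Hamiltonian over fixed time $s$ at $|\xi|\sim R$ is the geodesic flow over time of order $sR$, so derivatives of $\tilde b$ grow without bound as $|\xi|\to\infty$ and the remainder estimates are not uniform. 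To repair this you would have to insert a compactly supported cutoff into the symbol and then control how the localizer (or the cutoff) moves through the damped propagator --- an Egorov-type statement essentially of the same strength as the lemma --- or pass to adjoints so the localizer lands next to the pseudodifferential factors. The paper's coherent-state route avoids both issues because every estimate is performed at fixed phase-space points in the compact support of $u$ before resumming.
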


In other words the damped propagator can be, at the lowest order, factorized by the undamped propagator and a damping part which principal symbol is given by the function $G$.  We here give the same proof as in \cite{thesis} but an arguably more digest proof of a similar result can be found for example in Lemma 6 of  \cite{non11}.
 
\begin{proof}[Proof.]
We are only going to prove this result with $M=\mathbf R^d$ with a metric $g$, the extension  to any compact Riemannian manifold is straightforward.  The first step of the proof is to precisely describe the action of $e^{it\mathcal P/h}$ on coherent state, this is a classical result and we will follow the presentation and notations of \cite{rob06}. Let $g : \mathbf R^d \to \mathbf C$ be the function defined by
\[
g : x\mapsto \frac{1}{\pi^{d/4}}\exp\left(-\|x\|_2^2/2\right)\pp
\]
and we note $\varphi_0=\Lambda_h g$ where $\Lambda_h$ is the dilatation operator defined by $\Lambda_h f(x)=h^{-d/4}f(h^{-1/2}x)$. In other words we have
\[
\varphi_0(x)=\frac{1}{(\pi h)^{d/4}}\exp\left(\frac{-\|x\|^2}{2h}\right)\pp
\]
If $\rho=(x_0,\xi_0)$ is a point of $T^*\mathbf R^d=\mathbf R^{2d}$ we define $\varphi_\rho$ by
\[
\varphi_\rho=\mathcal T (\rho) \varphi_0
\]
where $\mathcal T(\rho)=\exp\left(\frac{i}{h}(\xi_0\cdot x +ihx_0\cdot\partial_x )\right)$ is the Weyl operator, in other words 
\[
\varphi_\rho(x)=e^{ix\cdot\xi_0/h}\varphi_0(x-x_0)\pp
\]
The function $\varphi_\rho$ is called the coherent state associated with $(x_0,\xi_0)$. Finally, if $v\in \mathbf C^n$ we define $\varphi_{\rho,v}=\varphi_\rho \cdot v$.
According to \cite{rob06}, for every integer $N$, every $\rho$ in a compact set $K$ and every $t\in [0;T]$ we have
\[
\left\|e^{it\mathcal P/h}\varphi_{\rho,v}-\psi^{(N)}_{\rho,v}(t)\right\|_{L^2}=\mathcal O_{K,T}(h^{\frac{N+1}{2}})
\]
where 
\[
\psi^{(N)}_{\rho,v}(t)=e^{i\delta_t/h}\mathcal T(\rho_t)\Lambda_h \widetilde G_{2t}( j(\widetilde\rho)) \mathcal M[F_t]\left( \sum_{0\leq j \leq N} h^{j/2}b_j(t) g \right)\pp
\]
We will only give a partial description of the terms of $\psi^{(N)}_{\rho,v}(t)$ here, for a complete definition see \cite{rob06}. The point $\rho_t\in \mathbf T^*\mathbf R^d$ is simply given by the inverse Hamiltonian flow : $\rho_t=(x_{-t},\xi_{-t})=\phi_{-t}(x_0,\xi_0)=\phi_{-t} \rho$ and $\widetilde \rho =(x_0,\xi_0/2)$. The quantity $\delta_t$ is real and only depends on $t$
. The function $\widetilde G_t : T^*\mathbf R^d \to \mathscr M_n(\mathbf C) $ is defined as the solution of the following differential equation :
\begin{equation}
\left\lbrace
\begin{array}{l}
\widetilde G_0(x_0,\xi_0)=\mathrm{Id}_n \\
\partial_t \widetilde G_t(x_0,\xi_0)=-a(x_{t})\sqrt z \widetilde G_{t}(x_0,\xi_0)
\end{array}
\right.
\end{equation}
we thus have $\dt \widetilde G_{2t}(j(\widetilde \rho)) = -2a(x_{-t})\sqrt z \widetilde G_{2t}(j(\widetilde \rho))$. The function $j: T^*{\mathbf R^d}\to T^*\mathbf R^d$ is defined by $j(x,\xi)=(x,-\xi)$. The functions $b_j(t) $ are polynomial functions in the $x$ variable with coefficients in $\mathbf C^n$, the first term $b_0(t)$ is constant and equal to $v$. The term $\mathcal M[F_t]$ describe the sprawl of the Gaussian $g$ under the action of the propagator, $F_t$ is a flow of linear symplectic transformation and $\mathcal M$ is a realization of the metaplectic representation. We can apply the same result when $a=0$ and we find 
\[
\left\|e^{-iht \Delta}\varphi_{\rho,v}-e^{i\delta_t/h}\mathcal T(\rho_t)\Lambda_h \mathcal M[F_t]\left( \sum_{0\leq j \leq N} h^{j/2}c_j(t) g \right)\right\|_{L^2}=\mathcal O_{K,T}(h^{\frac{N+1}{2}})\vv
\]
the only difference with $e^{it\mathcal P/h}\varphi_{\rho,v}$ is in the polynomials $c_j$ and in the absence of $\widetilde G_t$, note that we also have $c_0(t)=v$. Since the function $v\mapsto \psi_{\rho,v}^{(N)}(t)$ is linear and since $b_0(t)=c_0(t)=v$ we can define by induction some matrices $q_1(t,\rho),\ldots , q_N(t,\rho)$ depending polynomially on $x$ such that 
\[
\left(1+h^{1/2}q_1(t,\rho)+\ldots + h^{N/2}q_N(t,\rho)\right)\left( \sum_{0\leq j \leq N} h^{j/2}c_j(t) g \right) =
\]
\begin{equation}\label{eq:definitiondesqi}
 \left( \sum_{0\leq j \leq N} h^{j/2}b_j(t) g \right) + \mathcal O(h^{\frac{N+1}{2}})
\end{equation}
for every $v\in \mathbf C^n$, $t\in [0;T]$ and $\rho\in K$. Indeed the matrix $q_i(t,\rho)$ must satisfy the relation 
\[
c_i(t)+\sum_{j=1}^{i} q_j(t,\rho)c_{i-j}(t)=b_i(t)\Longleftrightarrow q_i(t,\rho)v=b_i(t)-c_i(t)-\sum_{j=1}^{i-1} q_j(t,\rho)c_{i-j}(t)
\]
for every $v\in \mathbf C^n$ and since all the polynomials $b_j(t)$ and $c_j(t)$ depend linearly on $v$ this defines uniquely the matrix $q_i(t,\rho)$ if the $q_j(t,\rho), \; j=1,\ldots,i-1$ are fixed.
Now let $f_1,f_2 : \mathbf R^{2d} \to \mathscr M_n(\mathbf C)$ be two symbols of order $\leq m\in \mathbf R$; then we have
\[
\mathcal{M}[F_t]\mathrm{Op}_1^{\mathrm W}(f_1)=\mathrm{Op}_1^{\mathrm W}\left(f_1\circ F_t^{-1}\right) \mathcal M[F_t]
\]
where $\mathrm{Op}_1^{\mathrm W}$ is the Weyl quantization for $h=1$. A proof of this result can be found in \cite{rob06}. We also have
\[
\Lambda_h(f_1\cdot f_2)=h^{d/4}\Lambda_h(f_1)\cdot\Lambda_h(f_2)\vv
\]
\[
\text{ and }\; \mathcal{T}(z_t)(f_1 \cdot f_2) = \mathcal{T}(z_t)(f_1)\cdot\mathcal{T}(z_t)(f_2)\pp
\]
Using the previous relations and \eqref{eq:definitiondesqi} we see that there exist some matrices $Q_1(t,\rho),\ldots,Q_N(t,\rho)$ depending polynomially on $x$ such that 
\[
\left\|\psi_{\rho,v}^{(N)} -\left(\widetilde G_{2t}(j(\widetilde\rho))+\sum_{i=1}^N h^{i/2}Q_i(t,\rho)\right)e^{i\delta_t/h}\mathcal T(\rho_t)\Lambda_h \mathcal M[F_t]\left( \sum_{0\leq j \leq N} h^{j/2}c_j(t) g \right)\right\|_{L^2}=\mathcal O(h^{\frac{N+1}{2}})\pp
\]
Notice that the matrix $\widetilde G_t(j(\rho))$ does not depend on $x$, this is due to the fact that $b_0(t)=c_0(t)$. Consequently we have 
\begin{equation}
\left\| e^{it\mathcal P/h}\varphi_{\rho,v}-\left(\widetilde G_{2t}(j(\widetilde\rho))+\sum_{i=1}^N h^{i/2} Q_i(t,\rho)\right)e^{-iht\Delta}\varphi_{\rho,v} \right\|_{L^2} = \mathcal O(h^{\frac{N+1}{2}})
\end{equation}
and the $\mathcal O(h^{\frac{N+1}{2}}) $ is uniform in $\rho\in  K$ and $t\in [0;T]$. If we take some symbol $u$ with $\supp (u) \Subset K$ then the estimates become uniform in $\rho $ : 
\begin{equation}
\left\| \mathrm{Op}_h^{\mathrm{AW}}(u) e^{it\mathcal P/h}\varphi_{\rho,v}- \mathrm{Op}_h^{\mathrm{AW}}(u)\left(\widetilde G_{2t}(j(\widetilde\rho))+\sum_{i=1}^N h^{i/2}Q_i(t,\rho)\right)e^{-iht\Delta}\varphi_{\rho,v} \right\|_{L^2} = \mathcal O(h^{\frac{N+1}{2}})
\end{equation}
uniformly in $t\in [0;T]$ and $\rho\in \mathbf R^{2d}$. The next step is to remark that $e^{-iht\Delta}\varphi_{\rho,v}$ is a sum of derivatives of  anisotropic coherent states associated with $\rho_t$. So, by  using $\rho=\phi_{t} \rho_t$, there exist some symbols $g_t^{(i)}$ such that
\[
\left\|\mathrm{Op}_h^{\mathrm{AW}}(u)\left[\widetilde G_{2t}(j(\widetilde\rho))e^{-iht\Delta}-\left(\mathrm{Op}_h^{\mathrm{AW}}(\widetilde G_{2t}\circ j \circ \phi_{2t})+\right. \right. \right. \quad \quad \quad \quad \quad \quad  \quad \quad \quad \quad  
\]
\[
\quad \quad  \quad \quad  \quad \quad \quad \quad  \quad  \quad \left.\left.\left.\sum_{i=1}^{N}h^{j/2}\mathrm{Op}_h^{\mathrm{AW}}(g_t^{(i)})\right)e^{-iht\Delta}\right]\varphi_{\rho,v}\right\|_{L^2}=\mathcal O(h^{\frac{N+1}{2}})
\]
uniformly in $\rho\in \mathbf R^{2d}$ and $t\in[0;T]$. The same  is true for the matrices $Q_i$ and thus there exist some symbols $G_t^{(i)}$ such that
\begin{equation}\label{eq:evolutiondevarphizv}
\left\|\mathrm{Op}_h^{\mathrm{AW}}(u)\left[  e^{it\mathcal P/h}- \left(\mathrm{Op}_h^{\mathrm{AW}}(\widetilde G_{2t}\circ j \circ \phi_{2t})+\sum_{i=1}^{N}h^{j/2}\mathrm{Op}_h^{\mathrm{AW}}(G_{2t}^{(i)})\right)e^{-iht\Delta} \right]\varphi_{\rho,v}\right\|_{L^2}=\mathcal O(h^{\frac{N+1}{2}})\pp
\end{equation}
We now use the fact that for every function $f=(f_1,\ldots, f_n)\in  L^{2}(\mathbf R^d)^n$ we have 
\begin{equation}\label{eq:identitequantificationantiwick}
f_i =\frac{1}{(2\pi h)^d}\int_{\mathbf R^{2d}} \langle f_i , \varphi_{z,e_i}\rangle_{L^2(\mathbf R^d)} f_i dz 
\end{equation}
where $e_i=(0,\ldots , 0 ,1, 0 \ldots, 0)$ is the $i$-th vector of the canonical basis of $\mathbf C^n$. Combining \eqref{eq:evolutiondevarphizv} and \eqref{eq:identitequantificationantiwick} we get that 
\[
\left\|\mathrm{Op}_h^{\mathrm{AW}}(u)\left[ e^{it\mathcal P/h}- \left(\mathrm{Op}_h^{\mathrm{AW}}(\widetilde G_{2t}\circ j \circ \phi_{2t})+\sum_{i=1}^{N}h^{j/2}\mathrm{Op}_h^{\mathrm{AW}}(G_{2t}^{(i)})\right)e^{-iht\Delta}  \right]\right\|_{L^2}=\mathcal O(h^{\frac{N+1}{2}-d})\pp
\]
Notice that $\|\mathrm{Op}_h^{\mathrm{AW}}(G_{2t}^{(i)})\|_{L^2}$ is bounded uniformly in $h$ and so, if we take $N+1>2d$ and only keep the principal terms we get
\[
\left\|\mathrm{Op}_h^{\mathrm{AW}}(u) \left( e^{it\mathcal P/h}- \mathrm{Op}_h^{\mathrm{AW}}(\widetilde G_{2t} \circ j\circ \phi_{2t}(x,\xi/2))e^{-iht\Delta} \right)\right\|_{L^2}=\mathcal O_T (h^{1/2})\pp
\]
The last final step is to remark that $\widetilde G_t$  depends smoothly on $z=1+\mathcal O(h)$ :  we have $\widetilde G_t = G_t$ when $z=1$ and so $\widetilde G_t = G_t +\mathcal O_T(h)$. Plugging this in the previous equality, we get what we wanted :
\[
\left\|\mathrm{Op}_h^{\mathrm{AW}}(u) \left( e^{it\mathcal P/h}- \mathrm{Op}_h^{\mathrm{AW}}(G_t \circ j\circ \phi_t)e^{-iht\Delta} \right)\right\|_{L^2}=\mathcal O_T (h^{1/2})\pp
\]
This finishes the proof of the lemma.
\end{proof}
We can now prove \eqref{eq:a1p-z}; we start by writing 
\[
\frac{-i}{h}\int_0^T e^{it(\mathcal P-z)/h} dt (\mathcal{P}-z)=\mathrm{Id}-e^{iT(\mathcal{P}-z)/h}
\]
and so
\[
A_1(z)(\mathcal{P}-z)=f\left(\frac{P-1}{Ch}\right)\mathrm{Op}^{\mathrm{AW}}_h(\chi_K^{(T)})f\left(\frac{P-1}{Ch}\right)\left( \mathrm{Id}-e^{iT(\mathcal{P}-z)/h}\right)\pp
\]
Using the fact that $\mathrm{Op}^{\mathrm{AW}}_h(\chi_K^{(T)})=\mathrm{Op}_h(\chi_K^{(T)})+\mathcal{O}_T(h)$ twice and applying Lemma \ref{lemmecommutateur} we get
\[
\displaystyle f\left(\frac{P-1}{Ch}\right)\mathrm{Op}^{\mathrm{AW}}_h(\chi_K^{(T)})f\left(\frac{P-1}{Ch}\right)e^{-iT(\mathcal{P}-z)/h}=\]
\[
 f\left(\frac{P-1}{Ch}\right)^2\mathrm{Op}^{\mathrm{AW}}_h(\chi_K^{(T)})e^{-iT(\mathcal{P}-z)/h} +\mathcal{O}_T(h)+\mathcal{O}\left(\frac{1}{C}\right)\pp
\]
The operator norm of $f\left(\frac{P-1}{Ch}\right)^2$ is bounded by $\|f^2\|_\infty$ and it only remains to estimate the operator norm of $\mathrm{Op}^{\mathrm{AW}}_h(\chi_K)e^{iT(\mathcal{P}-z)/h}$. According to Lemma \ref{lemmeetatcoherents} we have
\[\begin{array}{rcl}
\displaystyle \mathrm{Op}^{\mathrm{AW}}_h(\chi_K^{(T)})e^{iT(\mathcal{P}-z)/h}&=&\displaystyle e^{-itz/h}\mathrm{Op}^{\mathrm{AW}}_h(\chi_K^{(T)})\mathrm{Op}^{\mathrm{AW}}_h(G_{2T}\circ j \circ \phi_T(x,\xi/2))e^{-ihT\Delta}\\ \,&\, &+\displaystyle \mathcal{O}_T(h^{1/2})\\
&=& \displaystyle e^{-itz/h}\mathrm{Op}^{\mathrm{AW}}_h(\chi_K^{(T)} G_{2T}\circ j\circ \phi_T(x,\xi/2))e^{-ihT\Delta}+\mathcal{O}_T(h^{1/2})
\end{array}
\]
Since $e^{-ihT\Delta}$ is an isometry we have 
\[
\left\|\mathrm{Op}^{\mathrm{AW}}_h(\chi_K^{(T)})e^{iT(\mathcal P-z)/h}\right\|_{L^2}\leq \|\chi_K^{(T)} G_{2T}\circ j \circ \phi_T(x,\xi/2)\|_{\infty}e^{T\mathfrak{Im}(z/h)} + \mathcal{O}_T(h^{1/2})\vv
\]
recall that $\chi_K^{(T)}=\widetilde \chi_K^{(T)}\circ j \circ \phi_T$ so 
\[
\|\chi_K^{(T)} G_{2T}\circ j \circ \phi_T(x,\xi/2)\|_{\infty}=\|\widetilde\chi_K^{(T)} G_{2T}(x,\xi/2)\|_{\infty}\pp
\]
If we then use the definition of $\widetilde\chi_K^{(T)}$, $G_T$ and $\Omega_h$ we see that
\[
\|\widetilde\chi_K^{(T)} G_{2T}(x,\xi/2)\|_{\infty}e^{-T\mathfrak{Im}(z/h)}\leq e^{-T\varepsilon/2}\vv
\]
which finishes the proof of \eqref{eq:a1p-z}. The same technique is used to prove \eqref{eq:a2p-z} except that we don't even have to use Lemma \ref{lemmeetatcoherents} because $\mathfrak{Im}(z_0/h)$ is small enough :

\[
\left\|e^{iT(\mathcal{P}-z_0)}\right\|_{L^2}\leq e^{-T}\pp
\]

\paragraph*{}We continue by proving \eqref{eq:a3p-z}. Recall that $\mathcal{P}-z=P-1+\mathcal O (h)$ and that $\|w\|_{\infty}=\mathcal{O}\left(\frac{1}{Ch}\right)$ so
\[
\begin{array}{rcl}
\displaystyle A_3(z)(\mathcal{P}-z)&=&\displaystyle A_3(z)(P-1)+\mathcal{O}\left(\frac{1}{C}\right)\\
\,&=& \displaystyle \left(1-f\left(\frac{P-1}{Ch}\right)\right)w(P-1)(P-1)\left(1-f\left(\frac{P-1}{Ch}\right)\right)+\mathcal{O}\left(\frac{1}{C}\right)\pp
\end{array}\]
According to the definition of $w$ whe have $w(P-1)(P-1)= 1-\chi\left(\frac{P-1}{Ch}\right)$ and so it only remains to estimate the operator norm of 
\[
\left(1-f\left(\frac{P-1}{Ch}\right)\right)\chi\left(\frac{P-1}{Ch}\right)\left(1-f\left(\frac{P-1}{Ch}\right)\right) \pp
\]
The norm of this operator is bounded by $\|(1-f)\chi(1-f)\|_\infty$, recall that $f(0)=1$ and that $f$ is continuous so by choosing $\chi$ with sufficiently small support around $0$ we get 
\[
\|(1-f)\chi(1-f)\|_\infty <\delta
\]
for any fixed positive $\delta$ and \eqref{eq:a3p-z} is proved.

\paragraph*{} It only remains now to prove \eqref{eq:tracea2} and \eqref{eq:trace1-chi}, we start with \eqref{eq:trace1-chi} and we proceed as in \cite{sjo00}. Since the Anti-Wick quantification is positive and since $1-\chi_K^{(T)}\geq 0$ we know that
\[
\left\|f\left(\frac{P-1}{Ch}\right)\mathrm{Op}_{h}^{\mathrm{AW}}(1-\chi_K^{(T)})f\left(\frac{P-1}{Ch}\right)\right\|_{\mathrm{Tr}}\]
\[
\begin{array}{rcl}
&=&\displaystyle\mathrm{Tr}\left[ f\left(\frac{P-1}{Ch}\right)\mathrm{Op}_{h}^{\mathrm{AW}}(1-\chi_K^{(T)})f\left(\frac{P-1}{Ch}\right) \right]\\

&=&\displaystyle\mathrm{Tr}\left[ f\left(\frac{P-1}{Ch}\right)^2\mathrm{Op}_{h}^{\mathrm{AW}}(1-\chi_K^{(T)}) \right]\\
&=& \displaystyle \frac{1}{\sqrt{2\pi}} \mathrm{Tr}\left[\int_{\mathbf R} \widehat{f^2}(t)e^{it\frac{P-1}{Ch}}\mathrm{Op}_{h}^{\mathrm{AW}}(1-\chi_K^{(T)}) dt \right] \\
&=& \displaystyle \frac{C}{\sqrt{2\pi}} \mathrm{Tr}\left[\int_{\mathbf R} \widehat{f^2}(Ct)e^{it\frac{P-1}{h}}\mathrm{Op}_{h}^{\mathrm{AW}}(1-\chi_K^{(T)}) dt \right]\pp
\end{array}
\]
For $C$ large enough we have 
\begin{equation}\label{eq:supportfouriertransformoff}
\mathrm{supp}\widehat{ f^2}(C\cdot) \subset ]-\frac 1 2 T_\mathrm{min} ;\frac 1 2 T_\mathrm{min} [ 
\end{equation} 
where $T_\mathrm{min}$ is the smallest possible length of a closed trajectory in $p^{-1}(1)$ for the Hamiltonian flow generated by $p$. Whenever \eqref{eq:supportfouriertransformoff} is satisfied we know that 
\begin{equation}
\lim_{h\to 0} \mathrm{Tr}\left[\int_{\mathbf R} \widehat{f^2}(Ct)e^{it\frac{P-1}{h}}\mathrm{Op}_{h}^{\mathrm{AW}}(1-\chi_K^{(T)}) dt \right] = C_dh^{1-d}\widehat{f^2}(0)\int_{p^{-1}(1)}1-\chi_K^{(T)} L_0(d\rho)
\end{equation}
where $L_0$ is the Liouville measure on $p^{-1}(1)$ and $C_d$ only depends on $d$, the dimension of $M$. One can find a proof of this classical fact in \cite{disj99} for instance. We now use the formula $\|AB\|_\mathrm{Tr}\leq\|A\|\|B\|_\mathrm{Tr}$ and the fact that $\left\|f\left(\frac{P-1}{Ch}\right)\right\|_{L^2}\leq \left\|f\right\|_\infty$ to get \eqref{eq:trace1-chi}. We use the same technique for \eqref{eq:tracea2} and so it only remains to show that 
\[
-\frac i h  \int_0^T e^{it(\mathcal P- z_0)/h} d t (\mathcal{P}-z)
\]
is uniformly bounded in $h$. We recall that $\mathcal P -z_0$ is invertible and so we can write
\[
-\frac i h  \int_0^T e^{it(\mathcal P- z_0)/h} d t (\mathcal{P}-z)= \left(\mathrm Id - e^{iT(\mathcal{P}-z_0)/h}\right)(\mathcal P-z_{0})^{-1}(\mathcal P-z)\pp
\]
By using the energy formula \eqref{eq:formuleenergie''} we see that $\left\|e^{iT(\mathcal{P}-z_0)}\right\|_{L^2}\leq e^{-T}\leq  1$ and that $\left\|(\mathcal P -z_0)^{-1}\right\|_{L^2}$ is uniformly bounded in $h$. Consequently the operator $(\mathcal P-z_{0})^{-1}(\mathcal P-z)$ is uniformly bounded in $L^2$ norm when $h$ goes to $0$. This finishes the proof of Proposition \ref{prop:ResolvanteApprocheeBIS}.

\subsection{End of the proof of Proposition \ref{prop:ResolvanteApprochee}}
We now use Proposition \ref{prop:ResolvanteApprocheeBIS} to prove Proposition \ref{prop:ResolvanteApprochee}. According to Proposition \ref{prop:ResolvanteApprocheeBIS} we have
\[\begin{array}{rcl}
\displaystyle R(z)(\mathcal P-z )&=&\displaystyle\mathrm{Id}-f\left(\frac{P-1}{h}\right)  \mathrm{Op}_{h}^{\mathrm{AW}}(1-\chi_K^{(T)})  f\left(\frac{P-1}{h}\right) +A_2(\mathcal P-z) \\
& & \displaystyle +\mathcal O\left(\frac{1}{C}\right) +\mathcal{O}(\delta)+\mathcal{O}(e^{-\varepsilon T/2}) +\mathcal{O}_T(h^{1/2})\pp
\end{array}\]
We define
\[
R_2(z)=-f\left(\frac{P-1}{h}\right)  \mathrm{Op}_{h}^{\mathrm{AW}}(1-\chi_K^{(T)})  f\left(\frac{P-1}{h}\right) +A_2(\mathcal P-z)
\]
\[
\text{and }\;R_1(z)=R(z)(\mathcal{P}-z)-R_2(z)\pp
\]
We start by fixing a constant $C$ large enough so the remainder $\mathcal O \left(\frac{1}{C}\right)$ is smaller than $1/100$. We then chose $\chi$ in the definition of $A_3$ so that the remainder $\mathcal O (\delta)$ is smaller than $1/100$. Fix some arbitrary $\eta>0$, for $T$ large enough the remainder $\mathcal O (e^{-\varepsilon T/2})$ is smaller than $1/100$ and there exists some $\chi_K^{(T)}$ such that 
\[
\limsup_{h\to 0} h^{n-1}\|R_2(z)\|_{\mathrm Tr} <\eta\pp
\]
We finally take $h$ small enough so that the remainder $\mathcal{O}_T(h^{1/2}) $ is also smaller than $1/100$. By doing so we have constructed an operator $R(z)$ such that $R(z)(\mathcal P-z)=\mathrm{Id}+R_1(z)+R_2(z)$ with $\|R_1(z)\|_{L^2}<1/10$ and $\left\|R_2(z)\right\|_\mathrm{Tr}\leq \eta h^{1-d}$ for $h$ small enough. When $h$ goes to $0$ we can repeat the same process and make $\eta$ arbitrarily small and get $\left\|R_2(z)\right\|_\mathrm{Tr}= o (h^{1-d})$. Moreover according to Proposition \ref{prop:ResolvanteApprocheeBIS} we have 
\[
\|R_2(z_0)\|_{L^2}=\mathcal O \left( \frac{1}{C}\right)+\mathcal{O}(e^{-T\varepsilon/2})+\mathcal{O}_T(h^{1/2}) 
\]
and as before we can choose $C$ and $T$ in order to also have $\|R_2(z_0)\|_{L^2}<1/10$ for $h$ small enough. This implies that $R(z_0)(\mathcal{P}-z_0)$ is invertible and that $\left\| \left(R(z_0)(\mathcal P - z_0)\right)^{-1}\right\|$ is uniformly bounded in $h$. The proof of Proposition \ref{prop:ResolvanteApprochee} is thus finished.

\clearpage{\pagestyle{empty}\cleardoublepage}
\appendix
\section{Semi-classical anti-Wick quantization}\label{an:antiwick}

 \paragraph*{}In this appendix we present, without any proof, a construction of the $h$-Anti-Wick quantization and some of its basic properties. The proofs for $h=1$ can be found in \cite{gos11}.  We start by constructing it on $\mathbf R^d$ for scalar valued symbols. 

\begin{definition}
Let $(x,\xi)$ be a point of $T^*\mathbf R^d=\mathbf R^{2d}$, we define the function $e_{(x,\xi)}: \mathbf R^d \to \mathbf R^d$ by
\[
e_{(x,\xi)}: y\mapsto \frac{1}{(h\pi )^{d/4}}e^{-\|x-y\|_2^2/2h}e^{iy\cdot \xi /h}\pp
\]
\end{definition}
A standard calculation shows that $\|e_{(x,\xi)}\|_{L^2(\mathbf R^d)}=1$. We now define $\Pi_{(x,\xi)} :L^2(\mathbf R^d)\to L^2(\mathbf R^d)$ as the orthogonal projector on the vector subspace generated by $e_{(x,\xi)}$.

\begin{definition}
Let $a\in S^0_{1,0}(\mathbf R^{2d})$, we define its $h$-anti-Wick quantization $\mathrm{ Op}^{\mathrm{AW}}_h(a):L^2(\mathbf R^d)\to L^2(\mathbf R^d)$ by
\[
\mathrm{ Op}^{\mathrm{AW}}_h(a)=\frac{1}{(2\pi h)^d}\int_{\mathbf R^{2d} } a(x,\xi)\Pi_{(x,\xi)} dxd\xi\pp
\]
\end{definition}
The anti-Wick quantization has a few convenient properties.
\begin{proposition}\label{prop:normeantiwick}
If $a$ is real valued and non negative then $\mathrm{ Op}^{\mathrm{AW}}_h(a)$ is self-adjoint and positive:
\[
\forall u\in L^2(\mathbf R^d), \; \left\langle\mathrm{ Op}^{\mathrm{AW}}_h(a) u , u\right\rangle_{L^2(\mathbf R^d)}\geq 0 \pp
\]
Moreover we have the following estimates
\[
\left\|\mathrm{ Op}^{\mathrm{AW}}_h(a)\right\|_{L^2(\mathbf R^d)}\leq \|a\|_\infty 
\]
and 
\[
\mathrm{ Op}^{\mathrm{AW}}_h(1)=\mathrm{Id}_{L^2(\mathbf R^d)}\pp
\]
\end{proposition}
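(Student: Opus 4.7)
The plan is to reduce all three statements to one key identity, the resolution of the identity for coherent states, namely
\[
\forall u,v \in L^2(\R^d), \qquad \frac{1}{(2\pi h)^d}\int_{\R^{2d}} \langle e_{(x,\xi)}, u\rangle \, \overline{\langle e_{(x,\xi)}, v\rangle}\, dx\, d\xi = \langle u, v\rangle_{L^2(\R^d)}\pp
\]
To establish this I would first compute, for a fixed $x$, the quantity $\langle e_{(x,\xi)}, u\rangle$ as (up to the normalisation $(h\pi)^{-d/4}$) the Fourier transform in the $\xi/h$ variable of the function $y \mapsto e^{-\|x-y\|_2^2/2h} u(y)$. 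By the Plancherel formula applied in $\xi$, the integral
\[
\int_{\R^d} \langle e_{(x,\xi)}, u\rangle \overline{\langle e_{(x,\xi)}, v\rangle}\, d\xi
\]
equals $(2\pi h)^d (h\pi)^{-d/2}$ times $\int_{\R^d} e^{-\|x-y\|^2/h} u(y)\overline{v(y)}\, dy$. Integrating now in $x$ and using $\int_{\R^d} e^{-\|x-y\|^2/h} dx = (h\pi)^{d/2}$ yields the claimed identity. By polarisation this is equivalent to $\mathrm{Op}^{\mathrm{AW}}_h(1) = \mathrm{Id}$, which proves the third bullet.

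For the self-adjointness and positivity statement, I would simply observe that each $\Pi_{(x,\xi)}$ is an orthogonal rank one projector, hence a positive self-adjoint bounded operator. When $a$ is real valued, the operator-valued integrand $a(x,\xi)\Pi_{(x,\xi)}$ is self-adjoint pointwise, so taking the (weak) integral preserves self-adjointness. When in addition $a \geq 0$ we get for every $u\in L^2(\R^d)$
\[
\langle \mathrm{Op}^{\mathrm{AW}}_h(a) u, u\rangle = \frac{1}{(2\pi h)^d}\int_{\R^{2d}} a(x,\xi)\, |\langle e_{(x,\xi)}, u\rangle|^2 \, dx\, d\xi \geq 0\pp
\]

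For the operator norm bound I would estimate the sesquilinear form directly using the identity proved in the first paragraph. For any $u,v\in L^2(\R^d)$,
\[
|\langle \mathrm{Op}^{\mathrm{AW}}_h(a) u, v\rangle| \leq \frac{\|a\|_\infty}{(2\pi h)^d}\int_{\R^{2d}} |\langle e_{(x,\xi)}, u\rangle|\, |\langle e_{(x,\xi)}, v\rangle| \, dx\, d\xi,
\]
and by Cauchy--Schwarz in $L^2(\R^{2d}, dx\, d\xi)$ combined with the Plancherel-type identity above applied twice, the right hand side is bounded by $\|a\|_\infty \|u\|_{L^2}\|v\|_{L^2}$, yielding $\|\mathrm{Op}^{\mathrm{AW}}_h(a)\|_{L^2}\leq \|a\|_\infty$.

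The only mildly delicate step is the Plancherel/Bargmann computation, and the main care needed is tracking the factors of $h$ and $\pi$ that come from the Gaussian normalisation of $e_{(x,\xi)}$ and from the $h$-dependent Fourier transform; once the constants are matched so that $\mathrm{Op}^{\mathrm{AW}}_h(1)=\mathrm{Id}$, the other two properties follow by essentially formal manipulations.
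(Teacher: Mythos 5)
Your proof is correct: the resolution-of-identity computation (Plancherel in $\xi$, then Gaussian integration in $x$, with the constants matching so that $\mathrm{Op}^{\mathrm{AW}}_h(1)=\mathrm{Id}$), the positivity of each rank-one projector $\Pi_{(x,\xi)}$, and the Cauchy--Schwarz bound on the sesquilinear form together give all three assertions, and the same estimate also justifies the weak convergence of the defining integral. The paper itself offers no proof --- it refers to de Gosson's book for the case $h=1$ --- and your argument is exactly the standard coherent-state proof given there, up to the harmless choice of which slot of the inner product is conjugate-linear.
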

The $h$-anti-Wick quantization is linked to the $h$-Weyl quantization in the following way.
\begin{proposition}
$\mathrm{ Op}^{\mathrm{AW}}_h(a)$ is a $h$-pseudo differential operator of order $\leq 0$ and we have
\[
\mathrm{ Op}^{\mathrm{AW}}_h(a)=\mathrm{ Op}^{\mathrm{W}}_h(a*\varepsilon)
\]
where $\varepsilon : (y,\eta) \mapsto  (h\pi)^{-n/2}e^{-\|(y,\eta)\|_2^2/h}$. 
\end{proposition}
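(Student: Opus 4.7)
The plan is to verify the identity $\mathrm{Op}^{\mathrm{AW}}_h(a) = \mathrm{Op}^{\mathrm{W}}_h(a*\varepsilon)$ by computing Schwartz kernels on both sides and matching them, then reading off the pseudo-differential statement from this formula. Throughout I work on $\mathbf R^d$ with phase space $\mathbf R^{2d}$ (so the ``$n$'' in the statement is $2d$).

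First, I would write the kernel of the rank-one projector $\Pi_{(x,\xi)}$ as $e_{(x,\xi)}(u)\,\overline{e_{(x,\xi)}(v)}$ and insert it in the definition of $\mathrm{Op}^{\mathrm{AW}}_h(a)$ to obtain the kernel
\[
K_{\mathrm{AW}}(u,v)=\frac{1}{(2\pi h)^d (h\pi)^{d/2}}\iint_{\mathbf R^{2d}} a(x,\xi)\,e^{-(\|x-u\|^2+\|x-v\|^2)/(2h)}\,e^{i(u-v)\cdot\xi/h}\,dx\,d\xi\pp
\]
Using the elementary identity
\[
\|x-u\|^2+\|x-v\|^2=2\bigl\|x-\tfrac{u+v}{2}\bigr\|^2+\tfrac{1}{2}\|u-v\|^2
\]
this rewrites as $e^{-\|u-v\|^2/(4h)}$ times an integral Gaussian-localized on $m:=(u+v)/2$ in the $x$ variable.

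Second, I would replace the scalar factor $e^{-\|u-v\|^2/(4h)}$ by its Gaussian Fourier representation
\[
e^{-\|u-v\|^2/(4h)}=(h\pi)^{-d/2}\int_{\mathbf R^d} e^{-\|\eta'\|^2/h}\,e^{i(u-v)\cdot\eta'/h}\,d\eta'
\]
and perform the change of variables $\eta=\xi+\eta'$, collecting all Gaussians in $(x,\xi)$. This puts the kernel into the standard Weyl form
\[
K_{\mathrm{AW}}(u,v)=\frac{1}{(2\pi h)^d}\int_{\mathbf R^d}\sigma(m,\eta)\,e^{i(u-v)\cdot\eta/h}\,d\eta,\qquad
\sigma(m,\eta)=(h\pi)^{-d}\iint a(x,\xi)\,e^{-(\|m-x\|^2+\|\eta-\xi\|^2)/h}\,dx\,d\xi,
\]
and the right-hand side is exactly $(a*\varepsilon)(m,\eta)$, which is the desired identity $\mathrm{Op}^{\mathrm{AW}}_h(a)=\mathrm{Op}^{\mathrm{W}}_h(a*\varepsilon)$.

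Third, to deduce the pseudo-differential claim, I would observe that $\varepsilon$ is a Schwartz Gaussian, so for any $a\in S^0_{1,0}(\mathbf R^{2d})$ the convolution $a*\varepsilon$ lies in $S^0_{1,0}(\mathbf R^{2d})$ with seminorms controlled uniformly in $h$ (all derivatives fall on $a$ and the $L^1$ norm of $\varepsilon$ is one, which gives the required bounds on $\partial^\alpha(a*\varepsilon)$). Standard $h$-Weyl calculus then produces an $h$-pseudo-differential operator of order $\leq 0$.

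The only delicate point is the justification in step two of the interchange of the $(x,\xi)$ and $\eta'$ integrations and of the Gaussian Fourier identity, which for $a\in S^0_{1,0}$ is not immediately absolutely convergent in $\xi$. This is handled in the standard way by first proving the identity for $a\in\mathscr S(\mathbf R^{2d})$ (where Fubini applies directly) and extending to general symbols by testing both operators against a dense class of Schwartz functions, using the continuity of $a\mapsto\mathrm{Op}^{\mathrm{AW}}_h(a)$ and $a\mapsto\mathrm{Op}^{\mathrm{W}}_h(a*\varepsilon)$ in the appropriate topology.
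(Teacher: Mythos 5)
The paper itself gives no proof of this proposition; the appendix explicitly states that the anti-Wick quantization is presented ``without any proof'' and refers to de Gosson \cite{gos11} for the case $h=1$. So there is no in-paper argument to compare against. Your kernel calculation is the standard one, and it is correct: inserting the rank-one kernel $e_{(x,\xi)}(u)\overline{e_{(x,\xi)}(v)}$, completing the square in $x$ around the midpoint $m=(u+v)/2$, Fourier-expanding the residual Gaussian $e^{-\|u-v\|^2/4h}$ in a dual variable $\eta'$, and recentering $\eta=\xi+\eta'$ produces exactly the Weyl kernel with symbol $(a*\varepsilon)(m,\eta)$, with the constants matching. Your remark that the exponent ``$n$'' in the paper's formula for $\varepsilon$ must be read as the phase-space dimension $2d$ (so that $\int\varepsilon=1$), rather than the matrix dimension $n$ used elsewhere, is the right reading. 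The deduction that $a*\varepsilon\in S^0_{1,0}$ uniformly in $h$ from $\partial^\alpha(a*\varepsilon)=(\partial^\alpha a)*\varepsilon$ and $\|\varepsilon\|_{L^1}=1$ is correct, and the Fubini/density caveat is a genuine subtlety that you resolve in the standard way. One small point you could sharpen: for $a\in S^0_{1,0}$ the anti-Wick integral converges not merely after a density argument but directly in the weak operator sense, because for fixed $f,g\in L^2$ the map $\rho\mapsto\langle f,e_\rho\rangle$ is square-integrable over $\mathbf R^{2d}$ with respect to $(2\pi h)^{-d}d\rho$ (this is the isometry property of the coherent-state transform), so $\int a(\rho)\langle f,e_\rho\rangle\overline{\langle g,e_\rho\rangle}\,d\rho$ is absolutely convergent for any bounded $a$; this gives a cleaner justification than an abstract continuity-in-topology argument.
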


Consequently we know that $\left\|\mathrm{ Op}^{\mathrm{AW}}_h(a)-\mathrm{ Op}^{\mathrm{W}}_h(a)\right\|_{L^{2}(\mathbf R^d)}=\mathcal O(h)$ when $h$ goes to $0$. The same construction can be used for symbols $a$ valued in $\mathscr M_n(\mathbf C)$, the operator $\mathrm{ Op}^{\mathrm{AW}}_h(a)$ then acts on $L^2(\mathbf R^d)^n$. The previous results still hold in this case but Proposition \ref{prop:normeantiwick} needs to be slightly modified:

\begin{proposition}
If $a$ is valued in $\mathscr H^+_n(\mathbf C)$ the space of Hermitian positive semi-definite matrices then $\mathrm{ Op}^{\mathrm{AW}}_h(a)$ is self-adjoint and positive:
\[
\forall u\in L^2(\mathbf R^d)^n, \; \left\langle\mathrm{ Op}^{\mathrm{AW}}_h(a) u , u\right\rangle_{L^2(\mathbf R^d)^n}\geq 0 \pp
\]
Moreover we have the following estimates
\[
\left\|\mathrm{ Op}^{\mathrm{AW}}_h(a)\right\|_{L^2(\mathbf R^d)^n}\leq \sup_{(x,\xi)\in\mathbf R^d} \|a(x,\xi)\|_2 
\]
and 
\[
\mathrm{ Op}^{\mathrm{AW}}_h(\mathrm{Id}_n)=\mathrm{Id}_{L^2(\mathbf R^d)^n}\pp
\]
\end{proposition}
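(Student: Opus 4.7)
The plan is to reduce the matrix-valued statement to the scalar properties already recalled just above (Proposition \ref{prop:normeantiwick} and its associated resolution of identity), by writing the matrix anti-Wick operator in an explicit integral form. Concretely, for $a:\mathbf R^{2d}\to\mathscr M_n(\mathbf C)$ and $u=(u_1,\dots,u_n)\in L^2(\mathbf R^d)^n$, I would first set up the identity
\[
\mathrm{Op}^{\mathrm{AW}}_h(a)u=\frac{1}{(2\pi h)^d}\int_{\mathbf R^{2d}} a(x,\xi)\,\langle u,e_{(x,\xi)}\rangle\, e_{(x,\xi)}\,dxd\xi\vv
\]
where $\langle u,e_{(x,\xi)}\rangle\in\mathbf C^n$ is the vector whose $i$-th coordinate is the scalar inner product $\langle u_i,e_{(x,\xi)}\rangle_{L^2(\mathbf R^d)}$. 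This is just the componentwise scalar definition packaged with the pointwise matrix action, and from it I would derive the bilinear expression
\[
\left\langle \mathrm{Op}^{\mathrm{AW}}_h(a)u,v\right\rangle_{L^2(\mathbf R^d)^n}=\frac{1}{(2\pi h)^d}\int_{\mathbf R^{2d}}\bigl\langle a(x,\xi)\langle u,e_{(x,\xi)}\rangle,\langle v,e_{(x,\xi)}\rangle\bigr\rangle_{\mathbf C^n}\,dxd\xi\pp
\]

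From this bilinear formula the first three assertions fall out immediately. For $\mathrm{Op}^{\mathrm{AW}}_h(\mathrm{Id}_n)=\mathrm{Id}$, the integrand decouples into the $n$ scalar expressions $\langle u_i,e_{(x,\xi)}\rangle\overline{\langle v_i,e_{(x,\xi)}\rangle}$, so the statement reduces to the scalar identity $\mathrm{Op}^{\mathrm{AW}}_h(1)=\mathrm{Id}_{L^2(\mathbf R^d)}$ from Proposition \ref{prop:normeantiwick} applied coordinatewise. For self-adjointness when $a(x,\xi)^*=a(x,\xi)$ pointwise, I simply swap $u\leftrightarrow v$ and conjugate inside the Hermitian pairing under the integral. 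For positivity when $a(x,\xi)\in\mathscr H_n^+(\mathbf C)$, I put $v=u$ in the bilinear formula: the integrand becomes $\langle a(x,\xi)c(x,\xi),c(x,\xi)\rangle_{\mathbf C^n}$ with $c(x,\xi)=\langle u,e_{(x,\xi)}\rangle$, which is pointwise non-negative by definition of positive semi-definiteness, hence so is the integral.

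For the norm estimate, I would use the pointwise operator-norm bound $|\langle a(x,\xi) c,c'\rangle_{\mathbf C^n}|\leq \|a(x,\xi)\|_2\,\|c\|\,\|c'\|$ inside the bilinear formula, followed by Cauchy–Schwarz on the $(x,\xi)$-integral:
\[
\bigl|\langle \mathrm{Op}^{\mathrm{AW}}_h(a)u,v\rangle\bigr|\leq \sup_{(x,\xi)}\|a(x,\xi)\|_2\cdot I(u)^{1/2}I(v)^{1/2}\vv
\]
with $I(w)=(2\pi h)^{-d}\int\|\langle w,e_{(x,\xi)}\rangle\|_{\mathbf C^n}^2\,dxd\xi$. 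The scalar resolution of identity $\mathrm{Op}^{\mathrm{AW}}_h(1)=\mathrm{Id}_{L^2(\mathbf R^d)}$ applied to each component $w_i$ gives $I(w)=\sum_i\|w_i\|_{L^2}^2=\|w\|_{L^2(\mathbf R^d)^n}^2$, so the duality bound yields $\|\mathrm{Op}^{\mathrm{AW}}_h(a)\|\leq \sup\|a\|_2$.

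There is no real obstacle here: everything is a routine packaging of the scalar statements from Proposition \ref{prop:normeantiwick}, provided one is careful to define the matrix anti-Wick quantization by letting the matrix $a(x,\xi)$ act on the $\mathbf C^n$-valued Fourier–Bargmann transform $(x,\xi)\mapsto\langle u,e_{(x,\xi)}\rangle$. The only mild subtlety is writing the operator-norm estimate in bilinear (rather than quadratic) form, which is necessary because the norm bound is claimed for all matrix-valued $a$, not only Hermitian positive ones, so one cannot simply invoke $\|T\|=\sup\langle Tu,u\rangle/\|u\|^2$.
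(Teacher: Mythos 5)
Your proposal is correct and complete. Note that the paper itself offers no proof of this proposition: the annex explicitly states that the properties of the anti-Wick quantization are presented without proof, with a reference to de Gosson \cite{gos11} for the case $h=1$, so there is no argument in the paper to compare against. Your reduction is the standard one: the bilinear formula
\[
\left\langle \mathrm{Op}^{\mathrm{AW}}_h(a)u,v\right\rangle=\frac{1}{(2\pi h)^d}\int_{\mathbf R^{2d}}\bigl\langle a(x,\xi)\langle u,e_{(x,\xi)}\rangle,\langle v,e_{(x,\xi)}\rangle\bigr\rangle_{\mathbf C^n}\,dxd\xi
\]
immediately yields positivity and self-adjointness pointwise in $(x,\xi)$, the identity property and the normalization $I(w)=\|w\|^2$ both follow from the scalar resolution of identity applied coordinatewise, and the duality argument via Cauchy--Schwarz correctly delivers the operator-norm bound for general (not necessarily Hermitian) symbols. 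Your remark on why the bilinear rather than quadratic form is needed for the norm estimate is exactly the right point of care.
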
 
We can then define a $h$-anti-Wick quantization on a manifold $M$ using a partition of unity.


\section{Multiplicative ergodic theorem of Oseledets}\label{an:oseledets}
In this appendix we present, without any proofs, the multiplicative ergodic theorem of Oseledets and some related results. The proofs can be found in \cite{led82} and \cite{bape13}.  Let $(X,\mu)$ be a probability space and let $(\varphi_t)_{t\in \mathbf R}$ be a one parameter group of measure preserving functions from $X$ to $X$. Let $G : \mathbf R \times X \to \mathscr M_n(\mathbf C)$ be a cocycle, this means that $G$ satisfies the following conditions :
\begin{itemize}
\item[-] $\forall x\in X$, $G(0,x)=\mathrm{Id}_n$,
\item[-] $\forall x\in X$, $\forall s,t \in \mathbf R$, $G(s+t,x)=G(s,\varphi_t(x))G(t,x)$.
\end{itemize} 

\begin{theo*}[Oseledets]\label{th:oseledets}
Assume that $\log\|G(t,\cdot)\|$ and $\log\|G(t,\cdot)^{-1}\|$ are both in $L^1(X,\mu)$ for every $t\in [0;1]$. For $\mu$-almost every $x\in X$ there exists real numbers $\lambda_1(x)< \ldots < \lambda_{k(x)}(x)$ and a decomposition $\mathbf C^n=V_1^x\oplus \ldots \oplus V_{k(x)}^x$ such that for every $v\in V_i^x\backslash \{0\}$  
\[
\lim_{t\to \pm\infty} \frac{1}{t} \log \left\|G(t,x)v\right\|=\lambda_i(x)\pp
\]
Moreover, the $\lambda_i$ are invariant by $\varphi_t$ : $\lambda_i(x)=\lambda_i(\varphi(x))$ and 
\[
G(t,x)V_i^x=V_i^{\varphi_t(x)}\pp
\]
\end{theo*}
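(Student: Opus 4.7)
The plan is to reduce to discrete time and deploy Kingman's subadditive ergodic theorem repeatedly on exterior powers, then obtain the Oseledets splitting via convergence of the symmetrised cocycle $(G(n,x)^*G(n,x))^{1/(2n)}$.

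First I would reduce to the discrete cocycle. Set $\varphi=\varphi_1$ and $A(x)=G(1,x)$, so that for $n\in\N$ one has $G(n,x)=A(\varphi^{n-1}x)\cdots A(x)$. The hypothesis $\log\|G(t,\cdot)\|,\log\|G(t,\cdot)^{-1}\|\in L^1$ for $t\in[0,1]$ combined with the cocycle property gives $\log\|A\|,\log\|A^{-1}\|\in L^1$, which is the integrability condition required for every subsequent subadditive/supermultiplicative argument. Once the theorem is established for integer $n$ and the automorphism $\varphi$, a uniform bound on $\sup_{0\le s\le 1}\log\|G(s,x)\|$ (obtained from another $L^1$ control and Borel-Cantelli) transfers the limit from $n\in\N$ to $t\in\R_+$; invertibility and the cocycle relation then give the limit as $t\to-\infty$ as well.

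Next I would extract the Lyapunov spectrum using Kingman. The sequence $f_n^{(k)}(x)=\log\bigl\|\wedge^{k}G(n,x)\bigr\|$ is subadditive along $\varphi$ because $\wedge^{k}G$ is again a cocycle with norm submultiplicative; its positive part is integrable thanks to the $L^1$ hypothesis and the bound $\|\wedge^{k}G\|\le\|G\|^{k}$. Kingman's theorem thus yields an a.e.\ limit
\[
s_k(x)=\lim_{n\to\infty}\frac{1}{n}\log\bigl\|\wedge^{k}G(n,x)\bigr\|,
\]
and one proves that $s_k(x)$ equals the sum of the $k$ largest Lyapunov candidates. Setting $\mu_k(x)=s_k(x)-s_{k-1}(x)$ produces a decreasing sequence of exponents; collecting equal values furnishes the distinct $\lambda_1(x)<\dots<\lambda_{k(x)}(x)$. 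The $\varphi$-invariance of $s_k$ (hence of $\lambda_i$) is automatic because $f_n^{(k)}(\varphi x)-f_n^{(k)}(x)=O(1)$ in $n$.

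The core step — and the main obstacle — is producing the invariant splitting $\C^n=V_1^x\oplus\dots\oplus V_{k(x)}^x$ on which the exponents are realised as \emph{actual} limits for individual vectors. The standard route is Raghunathan's theorem: the positive self-adjoint matrices $\bigl(G(n,x)^{*}G(n,x)\bigr)^{1/(2n)}$ converge a.e.\ to a positive self-adjoint limit $\Lambda(x)$, whose distinct eigenvalues are exactly $e^{\lambda_i(x)}$. The eigenspaces of $\Lambda(x)$ provide the Oseledets filtration $F_i^x=\bigoplus_{j\ge i}\ker(\Lambda(x)-e^{\lambda_j(x)}I)^{\perp}$ in the backward direction, and running the same argument for $G(-n,x)$ furnishes a second filtration; their intersection at each level yields the splitting $V_i^x$. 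Proving the Raghunathan convergence is the delicate analytic point: it follows by controlling angles between successive $k$-planes $V_k^{n,x}$ associated to the top $k$ singular values of $G(n,x)$, showing via a Gram–Schmidt/Borel–Cantelli argument that these planes form a Cauchy sequence in the Grassmannian at exponential speed $e^{-(\lambda_k-\lambda_{k+1})n}$. Once this is in place, the equivariance $G(t,x)V_i^x=V_i^{\varphi_t x}$ is a direct consequence of the cocycle property, and the symmetric limit $\lim_{t\to\pm\infty}t^{-1}\log\|G(t,x)v\|=\lambda_i(x)$ for $v\in V_i^x\setminus\{0\}$ follows by combining the forward and backward filtrations.
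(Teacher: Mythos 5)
The paper does not prove this theorem: it appears in Annex~\ref{an:oseledets} as a black-box statement, with the proof explicitly deferred to \cite{led82} and \cite{bape13}. There is therefore no ``paper's proof'' to compare against, only a citation.

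As a self-contained sketch, your outline follows one of the standard routes to Oseledets (discretize the flow, run Kingman on the exterior powers $\wedge^k G$ to extract the Lyapunov spectrum, obtain the filtration from the Raghunathan-type convergence of $\bigl(G(n,x)^{*}G(n,x)\bigr)^{1/(2n)}$, and intersect the forward and backward filtrations to produce the splitting). The overall plan is sound. Two local remarks. First, the formula $F_i^x=\bigoplus_{j\ge i}\ker(\Lambda(x)-e^{\lambda_j(x)}I)^{\perp}$ is garbled: the eigenspaces of the positive limit $\Lambda(x)$ directly give an orthogonal decomposition, and the correct Oseledets filtration is obtained by cumulating those eigenspaces, e.g.\ $F_i^{+,x}=\bigoplus_{j\le i}\ker\bigl(\Lambda(x)-e^{\lambda_j(x)}I\bigr)$, with no orthogonal complement. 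Second, the final sentence (``the symmetric limit $\dots$ follows by combining the forward and backward filtrations'') is asserted rather than argued. Getting the genuine two-sided limit on each $V_i^x$, and not merely a one-sided $\limsup$/$\liminf$, requires showing that the angles between the forward and backward subspaces do not degenerate too fast along the orbit (a temperedness or Borel–Cantelli argument), which is precisely where the exponential-speed Grassmannian convergence you invoke earlier has to be deployed quantitatively. At the level of a proposal this is acceptable to leave as a pointer, but it is the step that carries most of the analytic weight in a complete proof.
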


The numbers $\lambda_i$ are called the Lyapunov exponents of $G$ and $\dim V_i^x$ is called the multiplicity of the Lyapunov exponent $\lambda_i(x)$. If the dynamical system $(X,\mu,(\varphi_t)_{t\in \mathbf R})$ is ergodic then the Lyapunov exponents and their multiplicity are constant on a full measure set of $X$. Note that the choice of the norm over the space $\mathscr M_n(\mathbf C)$ does not matter since they are all equivalent. Let $x\in X$ be a point for which the Lyapunov exponents are well defined, and let $\mu_1\leq \ldots \leq \mu_n(x)$ be the Lyapunov exponents counted with multiplicity, then 
\[
\sum_{j=0}^{i-1}\mu_{n-j}(x)=\lim_{t\to \pm \infty}\frac{1}{t} \log\|\Lambda^i G_t(t,x)\|
\]
where $\Lambda^i G$ acts on $\Lambda^i \mathbf C^n$ by 
\[
\Lambda^i G(t,x) (u_1\wedge\ldots \wedge u_i)=G(t,x)u_1\wedge \ldots \wedge G(t,x) u_i\pp
\]
In particular, the greatest Lyapunov exponent is given by the norm of $G$ :
\[
\mu_n(x) = \lim_{t\to \pm \infty} \frac 1 t \log \|G(t,x)\|.
\]
If the matrix $G(t,x)$ is invertible for every $t$ then we also have 
\[
\mu_1(x) = \lim_{t\to \pm \infty} \frac 1 t \log \left( \left\|G(t,x)^{-1}\right\|\right)\pp
\]


\clearpage{\pagestyle{empty}\cleardoublepage}

\section*{Bibliography}

\addcontentsline{toc}{chapter}{Bibliographie}
\bibliographystyle{alpha}
\renewcommand{\refname}{\vspace{-0.4cm}}

\end{document}